\renewcommand\Re{{\operatorname{Re}}}
\newcommand{\vertiii}[1]{{\left\vert\kern-0.25ex\left\vert\kern-0.25ex\left\vert #1 
    \right\vert\kern-0.25ex\right\vert\kern-0.25ex\right\vert}}
\theoremstyle{plain}
  \newtheorem{theorem}{Theorem}
  \newtheorem{lemma}[theorem]{Lemma}
  \newtheorem{corollary}[theorem]{Corollary}
\theoremstyle{definition}
  \newtheorem{remark}[theorem]{Remark}
\date{1st June 2014}
\title{Flow monotonicity and Strichartz inequalities}
\begin{document}
\author{Jonathan Bennett}
\address{School of Mathematics, University of Birmingham, Edgbaston, Birmingham, B15 2TT, United Kingdom}
\email{j.bennett@bham.ac.uk, m.iliopoulou@bham.ac.uk}
\thanks{This work was supported by the European Research Council [grant
number 307617] (Bennett, Iliopoulou)}

\author{Neal Bez}
\address{Department of Mathematics, Graduate School of Science and Engineering,
Saitama University, Saitama 338-8570, Japan}
\email{nealbez@mail.saitama-u.ac.jp}

\author{Marina Iliopoulou}

\begin{abstract} We identify complete monotonicity properties underlying a variety of well-known sharp Strichartz inequalities in euclidean space.
\end{abstract}
\maketitle
\section{Introduction}
A wide variety of important inequalities in analysis and geometry may be understood as a consequence of the monotonicity of associated functionals along appropriate flows. Perhaps the simplest and best-known example is the Cauchy--Schwarz inequality on $L^2(\mathbb{R}^d)$, which is easily seen to follow from the observation that the quantity
$$
t\mapsto \int_{\mathbb{R}^d} (e^{t\Delta}|f_1|^2)^{1/2}(e^{t\Delta}|f_2|^2)^{1/2}
$$
is nondecreasing, and thanks to the quadratic nature of the flows, converges to $\|f_1\|_2\|f_2\|_2$ as $t\rightarrow\infty$.
Such monotonicity phenomena are quite revealing, often allowing the identification of extremisers and sharp constants for the associated inequalities. We refer the reader to the survey articles \cite{BEsc} and \cite{CM} for further discussion of this diverse and far-reaching theory.

In \cite{BBCH} it was observed that such a monotonicity phenomenon exists in the context of the Strichartz estimates
\begin{equation}\label{strichschrod}
\|e^{is\Delta}f\|_{L^p_sL^q_x(\mathbb{R}\times\mathbb{R}^d)}\leq C_{p,q}\|f\|_{L^2(\mathbb{R}^d)}; \;\;\;\; \frac{2}{p}+\frac{d}{q}=\frac{d}{2};\;\; 2\leq p,q\leq\infty;\;\; (p,q,d)\not= (2,\infty,2)
\end{equation}
for the linear time-dependent Schr\"odinger equation. In particular, it was shown that the quantity
\begin{equation}\label{one}
t\mapsto\|e^{is\Delta}(e^{t\Delta}|f|^2)^{1/2}\|_{L^p_sL^q_x(\mathbb{R}\times\mathbb{R}^d)}
\end{equation}
is nondecreasing whenever
$q\in 2\mathbb{N}$ and $q$ divides $p$; i.e. for $(p,q,d)=(6,6,1), (8,4,1), (4,4,2)$.
This elementary result, while quite striking, has a number of shortcomings. In particular, it does not appear to extend easily to other dispersive/wave equations, other flows, the broader scale of Sobolev--Strichartz estimates or more general exponents $p$, $q$. The purpose of this paper is to show that much of this rigidity may be overcome if the underlying flows are chosen to be linear rather than quadratic.
For example, we shall see in Section 2 that the quantity
\begin{equation}\label{two}
t\mapsto C_{p,q}^p\|e^{t\Delta}f\|_{L^2(\mathbb{R}^d)}^p - \|e^{is\Delta}e^{t\Delta}f\|_{L^p_sL^q_x(\mathbb{R}\times\mathbb{R}^d)}^p
\end{equation}
is nonincreasing whenever $q\in 2\mathbb{N}$ and $q$ divides $p$. Here $C_{p,q}$ denotes the constant in \eqref{strichschrod}. Moreover, provided one is prepared to replace the constant $C_{p,q}$ with a larger one, the arithmetic restriction that $q\in 2\mathbb{N}$ and $q$ divides $p$ may be completely removed.
As may be expected, the monotonicity of both quantities \eqref{one} and \eqref{two} may be obtained by differentiating with respect to $t$ followed by a careful application of the divergence theorem in spatial variables. However, by contrast with \eqref{one}, the linear flow in \eqref{two} permits an alternative Fourier-analytic approach via Plancherel's theorem. This feature allows us to identify flows and similar monotone quantities associated with a variety of well-known sharp Strichartz and Sobolev--Strichartz estimates in the setting of other dispersive and wave equations. For example, in the setting of the wave equation we see that the heat extension $e^{t\Delta}f$ should be replaced with the harmonic extension $e^{-tD}f$ where $D=\sqrt{-\Delta}$. Furthermore, these monotone quantities turn out to be \emph{completely monotone} (or \emph{totally monotone}) in the sense that their $k$th derivatives have sign $(-1)^k$ for all $k\in\mathbb{N}$.

It seems plausible that the perspectives of this paper may bear fruit in more general contexts where explicit expressions for solutions are not available. Notice that, for example, the monotonicity of \eqref{two} may be rewritten as the monotonicity of the expression
\begin{equation}\label{two'}
t\mapsto C_{p,q}^p\left(\int_{\mathbb{R}^d}|u(0,t,x)|^2 \,\mathrm{d}x\right)^{p/2} - \int_{\mathbb{R}}\left(\int_{\mathbb{R}^d}|u(s,t,x)|^q \,\mathrm{d}x\right)^{p/q} \mathrm{d}s
\end{equation}
where $u:\mathbb{R}\times (0,\infty)\times\mathbb{R}^d\rightarrow\mathbb{C}$ satisfies the equations
\begin{equation} \label{e:spacetimeSchro}
\frac{\partial u}{\partial t}=i\frac{\partial u}{\partial s}=\Delta_x u.
\end{equation}
As we shall see in Section \ref{sect2'}, this permits arguments using mainly calculus, provided one is prepared to sacrifice the sharp constant $C_{p,q}$ in \eqref{strichschrod}.

\emph{Organisation.} Section \ref{sect2} is devoted to such monotonicity results in the context of sharp Strichartz inequalities for dispersive and wave equations in settings where Fourier-analytic methods are available. In Section \ref{sect2'} we make some observations about monotonicity without the arithmetic restriction $q\in 2\mathbb{N}$ and $q|p$. In Section \ref{sect3} we identify similar elementary monotonicity phenomena associated with the closely-related Stein--Tomas restriction theorem for general curved surfaces in low dimensions. Finally, in Section \ref{sect4} we make some observations about such monotonicity in the setting of the Strichartz inequalities for the kinetic transport equation, along with certain intimately related $k$-plane transform inequalities.

It is interesting to contrast the results and perspectives of this paper with those of Planchon and Vega \cite{PV}; see also the section on monotonicity formulae in \cite{Tao}.

\section{Monotonicity and sharp Strichartz inequalities for dispersive equations}\label{sect2}
In this section we reveal the complete monotonicity of several functionals associated with sharp space-time estimates, including certain classical Strichartz estimates, for the Schr\"odinger, wave and Klein--Gordon propagators. In the Schr\"odinger case we have the most to say, and so it is here that we will begin.

Throughout this section, we use the notation $\mathfrak{I}_m(f)$ for the functional given by
\[
\mathfrak{I}_m(f) = \int_{(\mathbb{R}^d)^{m}} |\widehat{\Pi(f)}(\xi)|^2 K(\xi) \, \mathrm{d}\xi
\]
for appropriate $f : \mathbb{R}^d \to \mathbb{C}$ and $m \geq 2$. Here, $K : (\mathbb{R}^d)^m \to [0,\infty)$ is some nonnegative kernel, and $\Pi$ is some transformation which takes functions on $\mathbb{R}^d$ to functions on $(\mathbb{R}^d)^m$. For each of the main results in this section, the pair $(\Pi, K)$ will be different, and for brevity we suppress the dependence on $(\Pi,K)$ in the notation for $\mathfrak{I}_m$.

The class of measurable functions $f : \mathbb{R}^d \to \mathbb{C}$ for which $\mathfrak{I}_m(f)$ is finite will be denoted by $\Upsilon_m$. We will use the Fourier transform heavily in this section, and since we are tracking explicit constants, we clarify that the Fourier transform we use is
\[
\widehat{f}(\xi) = \int_{\mathbb{R}^d} f(x) e^{-i x \cdot \xi} \, \mathrm{d}x
\]
for appropriate $f : \mathbb{R}^d \to \mathbb{C}$. We also use the notation $\widetilde{F}$ for the space-time Fourier transform of appropriate $F : \mathbb{R} \times \mathbb{R}^d \to \mathbb{C}$.

\subsection{The Schr\"odinger equation}
\begin{theorem} \label{t:QSchro}
Let $d \geq 2$, $m \geq 2$ and $f \in \Upsilon_m$. Then the function $Q : (0,\infty) \to \mathbb{R}$ given by
\[
Q(t) = \emph{S}(m,d) \mathfrak{I}_m(e^{t\Delta}f) - \| e^{is\Delta} e^{t\Delta}f \|_{L^{2m}(\mathbb{R} \times \mathbb{R}^{d})}^{2m}
\]
is completely monotone (decreasing). Here, the constant $\emph{S}(m,d)$ is given by
\[
\emph{S}(m,d) = \frac{|\mathbb{S}^{(m-1)d-1}|}{2m^{\frac{dm-2}{2}}(2\pi)^{(2m-1)d-1}},
\]
$\Pi(f)$ is the $m$-fold tensor product
\begin{equation*}
\Pi(f) = f \otimes \cdots \otimes f
\end{equation*}
and
\begin{equation*}
K(\xi) = \bigg( \sum_{1 \leq i < j \leq m} |\xi_i - \xi_j|^2 \bigg)^{\tfrac{1}{2}(d(m-1) - 2)}.
\end{equation*}
\end{theorem}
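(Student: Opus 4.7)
The plan is to exhibit $Q(t)$ as the Laplace transform of a pointwise nonnegative function, which makes complete monotonicity immediate: each $t$-derivative brings down a factor of $-2\tau \leq 0$, alternating the sign as required. The first step is to express the Strichartz term via Plancherel. Writing $u(s,x) = e^{is\Delta}g(x)$, the space-time Fourier transform $\widetilde{u}(\tau,\xi)$ is supported on the paraboloid $\tau + |\xi|^2 = 0$ with density proportional to $\widehat{g}(\xi)\,\delta(\tau+|\xi|^2)$, so that $\widetilde{u^m} = (2\pi)^{-(m-1)(d+1)}\widetilde{u}^{*m}$ and
\[
\|e^{is\Delta}g\|_{L^{2m}}^{2m} = \|u^m\|_{L^2}^2 = (2\pi)^{-(2m-1)(d+1)} \|\widetilde{u}^{*m}\|_{L^2}^2.
\]
The $m$-fold convolution evaluates to an integral of $\widehat{\Pi(g)}$ against the double delta $\delta(\tau+T(\xi))\delta(\eta-\Sigma(\xi))$, where $\Sigma(\xi) = \sum_i \xi_i$ and $T(\xi) = \sum_i |\xi_i|^2$, which by the coarea formula is supported on the fibres
\[
F_{(\tau,\eta)} = \{\xi \in (\mathbb{R}^d)^m : \Sigma(\xi) = \eta,\ T(\xi) = -\tau\}.
\]

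Next, I would analyse the fibre geometry. After recentring $\zeta_i = \xi_i - \eta/m$, one finds $\sum \zeta_i = 0$ and $\sum |\zeta_i|^2 = -\tau - |\eta|^2/m =: r^2$, so each fibre is the Euclidean sphere of radius $r$ inside an $(m-1)d$-dimensional affine subspace of $(\mathbb{R}^d)^m$. A Schur-complement calculation for $\Phi = (T,\Sigma)$ gives the coarea Jacobian
\[
J(\xi) = (\det \mathrm{d}\Phi\,\mathrm{d}\Phi^T)^{1/2} = 2m^{d/2} r(\xi), \qquad r(\xi)^2 = \tfrac{1}{m}\sum_{i<j}|\xi_i - \xi_j|^2,
\]
and, crucially, both $J$ and the kernel $K(\xi) = (m\,r(\xi)^2)^{(d(m-1)-2)/2}$ are constant along each fibre. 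When we replace $g$ by $e^{t\Delta}f$, the product $\widehat{\Pi(g)}$ acquires the factor $e^{-tT(\xi)}$, which is \emph{also} fibre-constant. Consequently both terms in $Q(t)$ may be written as Laplace transforms
\[
\int_0^\infty e^{-2t\tau}\, B_j(\tau)\,\mathrm{d}\tau, \qquad j = 1,2,
\]
where the integrands $B_j(\tau)$ are obtained by integrating over the $\tau$-level sphere bundle.

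To close the argument, I would verify the pointwise inequality $B_1(\tau) \geq B_2(\tau)$, which reduces, fibre by fibre, to the Cauchy--Schwarz inequality $|\int_F h\,\mathrm{d}\sigma|^2 \leq |F|\int_F |h|^2\,\mathrm{d}\sigma$ applied to $h = \widehat{\Pi(f)}\big|_F$. After this step one obtains
\[
B_1(\tau) - B_2(\tau) \geq \int_{\mathbb{R}^d}\left(\int_F |\widehat{\Pi(f)}|^2\,\mathrm{d}\sigma\right)\left[\frac{\emph{S}(m,d)\, K}{J} - (2\pi)^{-(2m-1)(d+1)+2m}\frac{|F|}{J^2}\right]\mathrm{d}\eta,
\]
and the remaining bookkeeping is to check that inserting $|F| = |\mathbb{S}^{(m-1)d-1}|\,r^{(m-1)d-1}$, $J = 2m^{d/2}r$, and the formula for $K$ above, the two expressions in the bracket cancel exactly with the stated constant $\emph{S}(m,d) = |\mathbb{S}^{(m-1)d-1}|/(2m^{(dm-2)/2}(2\pi)^{(2m-1)d-1})$. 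Complete monotonicity of $Q$ on $(0,\infty)$ then follows from the fact that the Laplace transform of a nonnegative function is completely monotone.

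The principal obstacle is arithmetic rather than conceptual: one must carefully track the various powers of $2\pi$, $m$ and $r$ coming from Plancherel, the coarea Jacobian, the fibre sphere measure, and the kernel $K$, and check that they collapse to exactly the stated $\emph{S}(m,d)$. Once this identification is in hand, the proof is essentially a fibre-wise Cauchy--Schwarz combined with the observation that the heat-flow factor $e^{-tT(\xi)}$ is constant along the level sets of $(T,\Sigma)$.
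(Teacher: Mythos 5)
Your proposal is correct and follows essentially the same route as the paper: expand the $L^{2m}$ norm via Plancherel onto the fibres of $\xi\mapsto(\sum_j\xi_j,\sum_j|\xi_j|^2)$, identify the fibre mass as a constant multiple of $K(\xi)$ (this is exactly Lemma \ref{l:P1Schro}, which you obtain by the coarea formula and explicit sphere geometry rather than by the paper's affine-invariance argument for the convolved parabolic measure, arriving at the same value $\tfrac{|\mathbb{S}^{(m-1)d-1}|}{2m^{(md-2)/2}}K(\xi)$), and conclude by fibrewise Cauchy--Schwarz, which is precisely the integrated form of the paper's symmetrized representation $Q(t)=\tfrac{1}{2(2\pi)^{(2m-1)d-1}}\iint e^{-2t|\xi|^2}|\widehat{\Pi(f)}(\xi)-\widehat{\Pi(f)}(\eta)|^2\,\mathrm{d}\Sigma_\xi(\eta)\,\mathrm{d}\xi$. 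In both treatments complete monotonicity then follows from the Laplace-transform structure furnished by the fibre-constant weight $e^{-2t\sum_j|\xi_j|^2}$.
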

The exponent on the kernel $\tfrac{1}{2}(d(m-1) - 2)$ is nonnegative whenever $d \geq 2$ and $m \geq 2$, and vanishes if and only if $(m,d) \in \{(3,1),(2,2)\}$. Before going further, we immediately state the following corollary, which concerns the classical Strichartz estimates which are contained in Theorem \ref{t:QSchro}, or quickly follow from it.
\begin{corollary} \label{c:StrichartzSchro}
Suppose $(p,q,d) \in \{(6,6,1),(8,4,1),(4,4,2)\}$ and $f \in L^2(\mathbb{R}^d)$. Then the function $Q : (0,\infty) \to \mathbb{R}$ given by
\[
Q(t) = C_{p,q}^p \|e^{t\Delta}f\|_{L^2(\mathbb{R}^d)}^p - \| e^{is\Delta} e^{t\Delta}f \|_{L^{p}_sL^{q}_x(\mathbb{R} \times \mathbb{R}^{d})}^{p}
\]
is completely monotone (decreasing), where
\[
C_{p,q} = \left\{\begin{array}{lllll} 12^{-1/12}\,, & (p,q,d) = (6,6,1) \\
2^{-1/4}\,, &  (p,q,d) = (8,4,1) \\
2^{-1/2}\,, & (p,q,d) = (4,4,2)
\end{array} \right..
\]
\end{corollary}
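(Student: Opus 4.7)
The plan is to derive each of the three cases from Theorem~\ref{t:QSchro}, by matching the $L^{2m}_{s,x}$ quantity from the theorem with $\|e^{is\Delta}e^{t\Delta}f\|_{L^p_s L^q_x}^p$, either directly or after a tensor-product reduction. The cases $(p,q,d) = (4,4,2)$ and $(6,6,1)$ correspond to the diagonal exponents $p = q = 2m$ (with $m = 2$ and $m = 3$ respectively) for which the kernel exponent $\tfrac{1}{2}(d(m-1)-2)$ in Theorem~\ref{t:QSchro} vanishes. In these two cases the kernel $K$ is identically $1$, so $\mathfrak{I}_m$ reduces via Plancherel applied to the $m$-fold tensor product to $(2\pi)^{md}\|\,\cdot\,\|_{L^2(\mathbb{R}^d)}^{2m}$. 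The remaining case $(8,4,1)$ needs one further step, since the mixed norm $L^8_s L^4_x$ is not an $L^{2m}$ norm.

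For $(p,q,d) = (4,4,2)$ I would apply Theorem~\ref{t:QSchro} with $m = d = 2$; Plancherel gives $\mathfrak{I}_2(g) = (2\pi)^4\|g\|_{L^2(\mathbb{R}^2)}^4$, and the factor $\mathrm{S}(2,2)(2\pi)^4$ should simplify to $\tfrac{1}{4} = C_{4,4}^4$. The case $(p,q,d) = (6,6,1)$ is identical in spirit, with $m = 3$, $d = 1$: here Plancherel yields $\mathfrak{I}_3(g) = (2\pi)^3\|g\|_{L^2(\mathbb{R})}^6$ and $\mathrm{S}(3,1)(2\pi)^3 = \tfrac{1}{2\sqrt{3}} = 12^{-1/2} = C_{6,6}^6$. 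Although the hypothesis of Theorem~\ref{t:QSchro} reads $d \geq 2$, the kernel exponent $\tfrac{1}{2}(d(m-1) - 2)$ is non-negative as long as $d(m-1) \geq 2$, which holds for $(m,d) = (3,1)$, so the theorem applies verbatim in this case as well.

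For the off-diagonal case $(p,q,d) = (8,4,1)$ the key identity is the tensor-product factorisation
\begin{equation*}
\|u\|_{L^8_s L^4_x(\mathbb{R}\times\mathbb{R})}^8 = \int_\mathbb{R}\left(\int_\mathbb{R}|u(s,x)|^4\,\mathrm{d}x\right)^{\!2}\mathrm{d}s = \|u\otimes u\|_{L^4_{s,x_1,x_2}(\mathbb{R}\times\mathbb{R}^2)}^4,
\end{equation*}
together with the fact that the Schr\"odinger and heat propagators act diagonally on tensor products: if $u = e^{is\Delta_1}e^{t\Delta_1}f$, then $u\otimes u = e^{is\Delta_2}e^{t\Delta_2}(f\otimes f)$, where $\Delta_k$ denotes the Laplacian on $\mathbb{R}^k$. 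Applying the already-established $(4,4,2)$ case to $f\otimes f \in L^2(\mathbb{R}^2)$, and using $\|f\otimes f\|_{L^2(\mathbb{R}^2)}^4 = \|f\|_{L^2(\mathbb{R})}^8$, yields the complete monotonicity of $\tfrac{1}{4}\|e^{t\Delta}f\|_{L^2(\mathbb{R})}^8 - \|e^{is\Delta}e^{t\Delta}f\|_{L^8_s L^4_x}^8$; since $\tfrac{1}{4} = C_{8,4}^8$ this delivers the corollary.

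The main obstacle is simply the bookkeeping needed to verify the matching of constants: one must check that $\mathrm{S}(m,d)(2\pi)^{md}$ really does equal $C_{p,q}^p$ in each of the two diagonal cases, after which the off-diagonal case $(8,4,1)$ is a formal manipulation. All of the dynamical content---the complete monotonicity itself---is inherited from Theorem~\ref{t:QSchro}.
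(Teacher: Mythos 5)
Your proposal is correct and follows essentially the same route as the paper: Plancherel reduces $\mathfrak{I}_m$ to $(2\pi)^{md}\|\cdot\|_{L^2}^{2m}$ in the two cases $(m,d)\in\{(2,2),(3,1)\}$ where the kernel exponent vanishes, the constants $\mathrm{S}(2,2)(2\pi)^4=\tfrac14$ and $\mathrm{S}(3,1)(2\pi)^3=12^{-1/2}$ match as you computed, and the $(8,4,1)$ case is obtained from the $(4,4,2)$ case via the tensor-product identity $Q_{8,4,1}[f]=Q_{4,4,2}[f\otimes f]$, exactly as in the paper. Your remark that the theorem's proof covers $(m,d)=(3,1)$ despite the stated hypothesis $d\geq 2$ is a correct and worthwhile observation about a small inconsistency in the theorem's phrasing.
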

The underlying inequality for the monotone quantity $Q$ in Theorem \ref{t:QSchro}, obtained by comparing $Q(t)$ in the limiting cases $t \to 0$ and $t \to \infty$, states that
\begin{equation} \label{e:Carneiro}
\| e^{is\Delta} f \|_{L^{2m}(\mathbb{R} \times \mathbb{R}^d)}^{2m} \leq \mbox{S}(m,d) \int_{(\mathbb{R}^d)^{m}} \prod_{k=1}^m |\widehat{f}(\xi_k)|^2 \bigg( \sum_{1 \leq i < j \leq m} |\xi_i - \xi_j|^2 \bigg)^{\alpha}  \, \mathrm{d}\xi,
\end{equation}
where $\alpha = \tfrac{1}{2}(d(m-1) - 2)$. The constant $\mbox{S}(m,d)$ is optimal and certain gaussian functions form the set of extremisers. In full generality, inequality \eqref{e:Carneiro} and the characterisation of extremisers is due to Carneiro \cite{Carneiro}, and in the cases $(m,d) \in \{(3,1),(2,2)\}$ corresponding to $\alpha = 0$, these facts were established earlier by Foschi \cite{Foschi} and Hundertmark--Zharnitsky \cite{HZ}. Focussing on the case $m=2$, \eqref{e:Carneiro} simplifies to
\begin{equation} \label{e:Carneirobilinear}
\| e^{is\Delta} f \|_{L^{4}(\mathbb{R} \times \mathbb{R}^d)}^{4} \leq \frac{|\mathbb{S}^{d-1}|}{2^d(2\pi)^{3d-1}} \int_{(\mathbb{R}^d)^{2}}  |\widehat{f}(\xi_1)|^2 |\widehat{f}(\xi_2)|^2 |\xi_1 - \xi_2|^{d-2}   \, \mathrm{d}\xi
\end{equation}
which may be viewed as a sharp relative of the classical Sobolev--Strichartz estimate
\begin{equation} \label{e:SobStr}
\| e^{is\Delta} f \|_{L^{4}(\mathbb{R} \times \mathbb{R}^d)} \lesssim \|f\|_{\dot{H}^{\frac{d-2}{4}}(\mathbb{R}^d)}\,.
\end{equation}
Ozawa--Tsutsumi \cite{OT} established the following sharp inequality which may also be interpreted as relative of \eqref{e:SobStr}, requiring only $L^2$ regularity on the initial data, and compensating for this by measuring the solution in a classical homogeneous Sobolev space of nonpositive order
\begin{equation} \label{e:OT}
\| (-\Delta)^{\frac{2-d}{4}} |e^{is\Delta}f|^2 \|_{L^2(\mathbb{R} \times \mathbb{R}^d)}^2 \leq \frac{|\mathbb{S}^{d-1}|}{4(2\pi)^{d-1}} \|f\|_{L^2(\mathbb{R}^d)}^4 \,.
\end{equation}
Here $d \geq 2$, the constant is optimal, and again this has gaussian extremisers (it was observed in \cite{OT} that gaussians are amongst the extremisers; see \cite{BBJP} for the full characterisation). Estimates \eqref{e:Carneirobilinear} and \eqref{e:OT} bring to mind important work of Beals \cite{Beals} and Klainerman--Machedon \cite{KM1}, \cite{KM2} on null form estimates in the context of the wave equation.

Our next result shows that the sharp Ozawa--Tsutsumi inequality \eqref{e:OT} also enjoys complete monotonicity under heat-flow.
\begin{theorem} \label{t:QOT}
Let $d \geq 2$ and $f \in L^2(\mathbb{R}^d)$. Then the function $Q : (0,\infty) \to \mathbb{R}$ given by
\[
Q(t) = \frac{|\mathbb{S}^{d-1}|}{4(2\pi)^{d-1}} \|e^{t\Delta}f\|_{L^2(\mathbb{R}^d)}^4 - \| (-\Delta)^{\frac{2-d}{4}} |e^{is\Delta}e^{t\Delta}f|^2 \|_{L^{2}(\mathbb{R} \times \mathbb{R}^{d})}^{2}
\]
is completely monotone (decreasing).
\end{theorem}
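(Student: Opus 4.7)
The plan is to mirror the Plancherel-based strategy used for Theorem~\ref{t:QSchro}, reducing matters to a pointwise Cauchy--Schwarz inequality whose deficit furnishes the complete monotonicity. Write $g_t = e^{t\Delta}f$ and $u(s,x) = (e^{is\Delta}g_t)(x)$. First I compute the space-time Fourier transform
\[
\widetilde{|u|^2}(\tau,\xi) = (2\pi)^{-(d-1)}\int_{\mathbb{R}^d} \widehat{g_t}(\xi_1)\overline{\widehat{g_t}(\xi_1-\xi)}\,\delta\bigl(\tau + |\xi_1|^2 - |\xi_1-\xi|^2\bigr)\,d\xi_1,
\]
invoke Plancherel's theorem to write $\|(-\Delta)^{(2-d)/4}|u|^2\|_{L^2}^2 = (2\pi)^{-(d+1)}\int |\xi|^{2-d}\,|\widetilde{|u|^2}|^2\,d\tau\,d\xi$, square, and integrate out $\tau$, which produces $\delta\bigl(2(\xi_1 - \xi_1')\cdot\xi\bigr)$. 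Passing to polar coordinates $\xi = r\theta$ and decomposing $\xi_1 = \rho\theta + w_1$, $\xi_1' = \rho'\theta + w_2$ with $w_1, w_2 \in \theta^\perp$, the factors $|\xi|^{2-d}$, the polar Jacobian $r^{d-1}$, and the $r^{-1}$ from rescaling the delta collapse to a flat measure $dr\,d\theta$; the remaining delta forces $\rho = \rho'$. A reality argument (swapping $w_1 \leftrightarrow w_2$ conjugates the integrand) allows extending $\int_0^\infty dr$ to half the full real line, and Fubini then identifies the resulting double $\rho$-integral as a modulus squared, yielding
\[
\|(-\Delta)^{(2-d)/4}|u|^2\|_{L^2}^2 = \frac{1}{4(2\pi)^{3d-1}}\int_{\mathbb{S}^{d-1}}\iint_{\theta^\perp\times\theta^\perp}|M_\theta(w_1,w_2)|^2\,dw_1\,dw_2\,d\theta,
\]
where $M_\theta(w_1,w_2) := \int_{\mathbb{R}} \widehat{g_t}(\rho\theta + w_1)\overline{\widehat{g_t}(\rho\theta + w_2)}\,d\rho$.

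A parallel polar decomposition of $\|g_t\|_{L^2}^4 = (2\pi)^{-2d}\|\widehat{g_t}\|_{L^2}^4$ expresses $\frac{|\mathbb{S}^{d-1}|}{4(2\pi)^{d-1}}\|g_t\|_{L^2}^4$ as exactly the same integral, but with $|M_\theta(w_1,w_2)|^2$ replaced by the Cauchy--Schwarz majorant $\int|\widehat{g_t}(\rho\theta+w_1)|^2\,d\rho\cdot\int|\widehat{g_t}(\rho\theta+w_2)|^2\,d\rho$ (using $\int|\widehat{g_t}|^2 = (2\pi)^d\|g_t\|_{L^2}^2$ and integrating the trivial $\theta$-dependence over $\mathbb{S}^{d-1}$). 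Pointwise Cauchy--Schwarz thus already gives $Q(t) \geq 0$, recovering \eqref{e:OT} for $g_t$. To extract complete monotonicity I use the Cauchy--Schwarz defect identity
\[
\|a\|_2^2\|b\|_2^2 - \Bigl|\int a\overline{b}\,d\rho\Bigr|^2 = \tfrac{1}{2}\iint \bigl|a(\rho_1)b(\rho_2) - a(\rho_2)b(\rho_1)\bigr|^2\,d\rho_1\,d\rho_2
\]
applied with $a(\rho) = \widehat{g_t}(\rho\theta + w_1)$ and $b(\rho) = \widehat{g_t}(\rho\theta + w_2)$.

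The key observation is the symmetry
\[
|\rho_1\theta+w_1|^2 + |\rho_2\theta+w_2|^2 \;=\; |\rho_2\theta+w_1|^2 + |\rho_1\theta+w_2|^2 \;=\; \rho_1^2+\rho_2^2+|w_1|^2+|w_2|^2,
\]
which lets the common heat factor pull cleanly out of the squared difference, giving
\[
Q(t) = \frac{1}{8(2\pi)^{3d-1}}\int_{\mathbb{S}^{d-1}}\!\iint_{\theta^\perp\times\theta^\perp}\!\iint_{\mathbb{R}^2} e^{-2t\lambda}\bigl|\widehat f(\rho_1\theta + w_1)\widehat f(\rho_2\theta + w_2) - \widehat f(\rho_2\theta+w_1)\widehat f(\rho_1\theta + w_2)\bigr|^2 d\rho_1\, d\rho_2\, dw_1\, dw_2\, d\theta
\]
with $\lambda = \rho_1^2+\rho_2^2+|w_1|^2+|w_2|^2 \ge 0$. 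Since $Q(t)$ is thereby expressed as a nonnegative superposition of the completely monotone functions $t \mapsto e^{-2t\lambda}$, differentiation under the integral (justified by the Gaussian for $t>0$) gives $(-1)^k Q^{(k)}(t) = \int (2\lambda)^k e^{-2t\lambda}(\cdots)\,d\mu \geq 0$, so $Q$ is completely monotone. The main technical obstacle I anticipate is the step-one bookkeeping: tracking all powers of $2\pi$, verifying the exact polar-Jacobian cancellation of $|\xi|^{2-d}$, and justifying the reality argument that reduces $\int_0^\infty dr$ to half the full real line.
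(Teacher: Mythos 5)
Your argument is correct, and the bookkeeping you were worried about does work out: the polar collapse produces $r^{2-d}\cdot r^{d-1}\cdot\tfrac{1}{2r}=\tfrac12$, the reflection $r\mapsto -r$ combined with a shift in $\rho$ conjugates the integrand (while the $w_1\leftrightarrow w_2$ swap shows the $r$-integrand is real after integrating in $w_1,w_2$), so $\int_0^\infty \mathrm{d}r=\tfrac12\int_{\mathbb{R}}\mathrm{d}r$, and the constants $\tfrac{1}{4(2\pi)^{3d-1}}$ and then $\tfrac{1}{8(2\pi)^{3d-1}}$ are exactly right. However, your route differs genuinely from the paper's after the common Plancherel/space-time Fourier transform step. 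The paper keeps the four frequency variables and the delta measure $\mathrm{d}\Sigma_\xi(\eta)$ intact, relabels so that the relevant tensor is $\Pi(f)=f\otimes f(-\,\cdot\,)$, computes the total mass $\int\mathrm{d}\Sigma_\xi=|\mathbb{S}^{d-1}|/4$ (Lemma \ref{l:P1OT}, via a change of variables and polar coordinates), and then applies the same measure-symmetrisation trick $\mathrm{d}\Sigma_\xi(\eta)\mathrm{d}\xi=\mathrm{d}\Sigma_\eta(\xi)\mathrm{d}\eta$ used for Theorem \ref{t:QSchro}; this buys a visibly unified treatment of Theorems \ref{t:QSchro} and \ref{t:QOT}. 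You instead integrate the deltas out explicitly via the direction-by-direction orthogonal decomposition $\xi=\rho\theta+w$ and reduce to the one-dimensional Lagrange (Cauchy--Schwarz defect) identity along each line — essentially a heat-flow version of Ozawa--Tsutsumi's original proof. This is more concrete: it exhibits $Q$ directly as a Laplace transform of a nonnegative measure thanks to the symmetry $\rho_1^2+\rho_2^2+|w_1|^2+|w_2|^2$ of the heat weight under the swap $\rho_1\leftrightarrow\rho_2$, and it makes the extremiser structure (vanishing of the antisymmetrised product along lines) transparent, at the cost of losing the parallel with the Schr\"odinger case. Both are valid; only make sure, when writing it up, to separate clearly the two symmetries used in the reality step (the $w$-swap gives reality, the $r$-reflection gives $I(-r)=\overline{I(r)}$), since as stated the single swap is doing double duty.
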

We shall see that a nice feature of our proofs of Theorems \ref{t:QSchro} and  \ref{t:QOT} is that they follow the same fundamental steps; this can be viewed as a unification of the underlying sharp inequalities, whose previously known proofs were rather different.

\begin{proof}[Proof of Theorem \ref{t:QSchro}]
Using Fourier inversion and by multiplying out the $L^{2m}$ norm we obtain
\begin{align*}
\| e^{is\Delta} e^{t\Delta} f \|_{L^{2m}(\mathbb{R} \times \mathbb{R}^{d})}^{2m}  & =  \int_{\mathbb{R}^{d+1}} \bigg|  \frac{1}{(2\pi)^d} \int_{\mathbb{R}^d} e^{ix \cdot \xi} e^{-is|\xi|^2} e^{-t|\xi|^2} \widehat{f}(\xi) \, \mathrm{d}\xi \bigg|^{2m} \, \mathrm{d}x\mathrm{d}s \\
&  = \frac{1}{(2\pi)^{(2m-1)d-1}} \int_{(\mathbb{R}^d)^m} \int_{(\mathbb{R}^d)^m} e^{-2t|\xi|^2} \widehat{\Pi(f)}(\xi) \overline{\widehat{\Pi(f)}(\eta)} \, \mathrm{d}\Sigma_\xi(\eta) \mathrm{d}\xi,
\end{align*}
where, for each $\xi \in (\mathbb{R}^d)^m$, $\mathrm{d}\Sigma_\xi$ is the measure
$$
\mathrm{d}\Sigma_\xi(\eta) = \delta\bigg(\sum_{j=1}^m \xi_j - \sum_{j=1}^m \eta_j \bigg)\delta\bigg(\sum_{j=1}^m |\xi_j|^2 - \sum_{j=1}^m |\eta_j|^2\bigg) \mathrm{d}\eta.
$$
\begin{lemma} \label{l:P1Schro}
For all $\xi \in (\mathbb{R}^d)^m$ we have
\begin{equation} \label{e:P1Schro}
\int_{(\mathbb{R}^d)^m} \mathrm{d}\Sigma_\xi  = \frac{|\mathbb{S}^{(m-1)d-1}| }{2m^{\frac{dm-2}{2}}}  K(\xi).
\end{equation}
\end{lemma}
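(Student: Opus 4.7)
The plan is to evaluate the integral by making an orthogonal decomposition of $(\mathbb{R}^d)^m$ into its diagonal and anti-diagonal parts, reducing the two constraints (conservation of momentum and of energy) to ``fix the barycentre'' and ``localise to a sphere in the orthogonal complement''.

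More concretely, I would first write $\mathbb{R}^{md} = V \oplus V^\perp$, where $V = \{(\sigma,\ldots,\sigma) : \sigma \in \mathbb{R}^d\}$ is the diagonal and $V^\perp = \{\omega \in (\mathbb{R}^d)^m : \sum_j \omega_j = 0\}$ has dimension $(m-1)d$. Any $\eta \in (\mathbb{R}^d)^m$ decomposes uniquely as $\eta_j = \sigma + \omega_j$ with $\sigma = \tfrac{1}{m}\sum_j \eta_j$ and $\omega \in V^\perp$. Since $V$ is parametrised by $\sigma \mapsto (\sigma,\ldots,\sigma)$ with Gram determinant $m^d$, we have $\mathrm{d}\eta = m^{d/2}\, \mathrm{d}\sigma\,\mathrm{d}\omega$ (where $\mathrm{d}\omega$ is Lebesgue measure on $V^\perp$). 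Decomposing $\xi_j = \bar\sigma + \nu_j$ analogously, one checks directly that
\[
\sum_j \eta_j - \sum_j \xi_j = m(\sigma - \bar\sigma), \qquad \sum_j|\eta_j|^2 - \sum_j|\xi_j|^2 = m(|\sigma|^2 - |\bar\sigma|^2) + \|\omega\|^2 - \|\nu\|^2,
\]
so on the support of the momentum-delta, where $\sigma = \bar\sigma$, the energy-delta collapses to $\delta(\|\omega\|^2 - \|\nu\|^2)$.

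Integrating out the $\sigma$-variable using the first delta $\delta(m(\bar\sigma - \sigma)) = m^{-d}\delta(\bar\sigma - \sigma)$ yields the factor $m^{d/2} \cdot m^{-d} = m^{-d/2}$, and the remaining integral $\int_{V^\perp} \delta(\|\omega\|^2 - r^2)\,\mathrm{d}\omega$, with $r = \|\nu\|$, is the standard spherical integral in dimension $(m-1)d$ equal to $\tfrac{1}{2}|\mathbb{S}^{(m-1)d-1}| r^{(m-1)d-2}$.

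The last step is the algebraic identity
\[
\sum_{1 \le i < j \le m} |\xi_i - \xi_j|^2 = m \sum_{j=1}^m |\nu_j|^2 = m r^2,
\]
which is an immediate consequence of expanding the squares together with $\sum_j \nu_j = 0$. Plugging $r^{(m-1)d-2} = m^{-(d(m-1)-2)/2} K(\xi)$ into the previous display and collecting the powers of $m$ gives exactly $\frac{|\mathbb{S}^{(m-1)d-1}|}{2m^{(dm-2)/2}} K(\xi)$. No step looks genuinely hard; the one thing to be careful about is the bookkeeping of the Jacobian $m^{d/2}$ together with the factor $m^{-d}$ from the delta, since getting this wrong is the easiest way to obtain an incorrect power of $m$ in the final constant.
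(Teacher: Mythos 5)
Your proof is correct: the Jacobian $m^{d/2}$ for the diagonal parametrisation, the factor $m^{-d}$ from $\delta(m(\bar\sigma-\sigma))$, the sphere integral $\tfrac{1}{2}|\mathbb{S}^{(m-1)d-1}|r^{(m-1)d-2}$ in $V^{\perp}$, and the identity $\sum_{i<j}|\xi_i-\xi_j|^2=m\|\nu\|^2$ all combine to give exactly $m^{-(dm-2)/2}$, so the bookkeeping you flagged as the danger point does come out right. Your route differs from the paper's in presentation though not in substance: the paper first recognises $\int\mathrm{d}\Sigma_\xi$ as the $m$-fold autoconvolution $\mathrm{d}\mu^{(m)}$ of the parabola-carried measure $\delta(\sigma-|\zeta|^2)$ evaluated at $(\sum_j|\xi_j|^2,\sum_j\xi_j)$, then uses Galilean invariance of $\mathrm{d}\mu$ to reduce to $\zeta=0$ and parabolic scaling to reduce to the single universal constant $\mathrm{d}\mu^{(m)}(1,0)$, computed by polar coordinates. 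The Galilean shift $v=-\zeta/m$ there is precisely your subtraction of the barycentre, and the final polar-coordinate computation is your sphere integral in $V^\perp$; what the paper's formulation buys is that the invariance structure is made explicit (which is why it generalises Foschi's Lemmas 3.2 and 4.1 cleanly, as the subsequent remark notes), whereas your direct change of variables is more self-contained and keeps all the powers of $m$ visible in one computation.
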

\begin{proof}
Let the measure $\mathrm{d}\mu$ be given by $\mathrm{d}\mu(\sigma,\zeta) = \delta(\sigma - |\zeta|^2)$. Then, the mass of $\mathrm{d}\Sigma_\xi$ is $\mathrm{d}\mu^{(m)}(\sigma,\zeta)$, where
\[
(\sigma,\zeta) = \bigg(\sum_{j=1}^m |\xi_j|^2, \sum_{j=1}^m \xi_j\bigg)
\]
and $\mathrm{d}\mu^{(m)}$ is the $m$-fold convolution of the measure $\mathrm{d}\mu$ with itself. Since $\mathrm{d}\mu$ is invariant under the affine map
$$
(\sigma,\zeta) \mapsto (\sigma + 2\zeta \cdot v + |v|^2, \zeta + v)
$$
for fixed $v \in \mathbb{R}^d$, it follows that
$$
\mathrm{d}\mu^{(m)}(\sigma,\zeta) = \mathrm{d}\mu^{(m)}(\sigma + 2\zeta \cdot v + m|v|^2, \zeta + mv).
$$
Choosing $v = -\frac{\zeta}{m}$ we obtain
$$
\mathrm{d}\mu^{(m)}(\sigma,\zeta) = \mathrm{d}\mu^{(m)}(\sigma - \tfrac{1}{m}|\zeta|^2,0) = (\sigma - \tfrac{1}{m}|\zeta|^2)^{\frac{1}{2}(d(m-1)-2)}\mathrm{d}\mu^{(m)}(1,0),
$$
where the second equality follows by a simple change of variables. The constant $\mathrm{d}\mu^{(m)}(1,0)$ can be computed using polar coordinates; finally we obtain
\[
\mathrm{d}\mu^{(m)}(\sigma,\zeta) = \frac{|\mathbb{S}^{(m-1)d-1}|}{2m^{\frac{d}{2}}} (\sigma - \tfrac{1}{m}|\zeta|^2)^{\frac{1}{2}(d(m-1)-2)}
\]
and using the identity
\[
m \sum_{j=1}^m |\xi_j|^2 - \bigg| \sum_{j=1}^m \xi_j \bigg|^2 = \sum_{1 \leq i < j \leq m} |\xi_i - \xi_j|^2
\]
we obtain the claimed expression for the mass of $\mathrm{d}\Sigma_\xi$.
\end{proof}
\begin{remark}
The above proof of Lemma \ref{l:P1Schro} generalises the argument of Foschi in Lemmas 3.2 and 4.1 of \cite{Foschi} which covered the cases $(m,d) \in \{(3,1),(2,2)\}$.
\end{remark}
 Using Lemma \ref{l:P1Schro}, we may write
\begin{align*}
\mbox{S}(m,d) \mathfrak{I}_m(e^{t\Delta}f) & = \mbox{S}(m,d) \int_{(\mathbb{R}^d)^m} e^{-2t|\xi|^2} |\widehat{\Pi(f)}(\xi)|^2 K(\xi) \, \mathrm{d}\xi \\
& = \frac{1}{(2\pi)^{(2m-1)d-1}}  \int_{(\mathbb{R}^d)^m} \int_{(\mathbb{R}^d)^m} e^{-2t|\xi|^2} |\widehat{\Pi(f)}(\xi)|^2  \, \mathrm{d}\Sigma_\xi(\eta)\mathrm{d}\xi
\end{align*}
and since $\mathrm{d}\Sigma_\xi(\eta)\mathrm{d}\xi = \mathrm{d}\Sigma_\eta(\xi)\mathrm{d}\eta$ and $|\xi|^2 = |\eta|^2$ on the support of this measure, it follows that
\[
\mbox{S}(m,d) \mathfrak{I}_m(e^{t\Delta}f) = \frac{1}{2(2\pi)^{(2m-1)d-1}}  \int_{(\mathbb{R}^d)^m} \int_{(\mathbb{R}^d)^m} e^{-2t|\xi|^2} (|\widehat{\Pi(f)}(\xi)|^2 + |\widehat{\Pi(f)}(\eta)|^2)  \, \mathrm{d}\Sigma_\xi(\eta)\mathrm{d}\xi.
\]
Therefore
\[
Q(t) = \frac{1}{2(2\pi)^{(2m-1)d-1}}  \int_{(\mathbb{R}^d)^m} \int_{(\mathbb{R}^d)^m} e^{-2t|\xi|^2} |\widehat{\Pi(f)}(\xi) - \widehat{\Pi(f)}(\eta)|^2 \, \mathrm{d}\Sigma_\xi(\eta)\mathrm{d}\xi
\]
and $Q$ is manifestly completely monotone (decreasing).
\end{proof}
\begin{proof}[Proof of Corollary \ref{c:StrichartzSchro}]
It is helpful to write
\[
Q_{p,q,d}[f](t) = C_{p,q}^p \|e^{t\Delta}f\|_{L^2(\mathbb{R}^d)}^p - \| e^{is\Delta} e^{t\Delta}f \|_{L^{p}_sL^{q}_x(\mathbb{R} \times \mathbb{R}^{d})}^{p}
\]
for the quantity $Q(t)$ in the statement of Corollary \ref{c:StrichartzSchro}.

When $(m,d) \in \{(3,1),(2,2)\}$, Plancherel's theorem implies
$$
\mathfrak{I}_m(f) = (2\pi)^{dm} \|f\|_{L^2(\mathbb{R}^d)}^{2m}
$$
and the complete monotonicity of $Q_{6,6,1}[f](t)$ and $Q_{4,4,2}[f](t)$ immediately follows from Theorem \ref{t:QSchro}. Finally, using the identity
\[
\|e^{is\Delta} e^{t\Delta} f \|_{L^8_sL^4_x(\mathbb{R} \times \mathbb{R})}^8 = \| e^{is\Delta}e^{t\Delta}(f \otimes f)\|_{L^4(\mathbb{R} \times \mathbb{R}^2)}^4,
\]
where $\Delta$ is either one-dimensional or two-dimensional depending on the context, we obtain
\[
Q_{8,4,1}[f](t) = Q_{4,4,2}[f \otimes f](t)
\]
and the complete monotonicity of $Q_{8,4,1}[f](t)$ follows.
\end{proof}

\begin{proof}[Proof of Theorem \ref{t:QOT}]
Using Plancherel's theorem in the space-time variables we obtain
\begin{equation*}
\| (-\Delta)^{\frac{2-d}{4}} |e^{is\Delta}e^{t\Delta}f|^2 \|_{L^2(\mathbb{R} \times \mathbb{R}^d)}^2 = \frac{1}{(2\pi)^{d+1}} \int_{\mathbb{R}^{d+1}} |\zeta|^{2-d} |\widetilde{|e^{is\Delta}e^{t\Delta}f|^2}(\tau,\zeta)|^2 \, \mathrm{d}\zeta \mathrm{d}\tau.
\end{equation*}
We have
$$
\widetilde{e^{is\Delta}e^{t\Delta}f}(\tau,\zeta) = 2\pi \delta(\tau + |\zeta|^2) \widehat{e^{t\Delta}f}(\zeta) = 2\pi \delta(\tau + |\zeta|^2) e^{-t|\zeta|^2}\widehat{f}(\zeta)
$$
and therefore
\begin{align*}
\widetilde{|e^{is\Delta}e^{t\Delta}f|^2}(\tau,\zeta) & = \frac{1}{(2\pi)^{d+1}} \,\widetilde{e^{is\Delta}e^{t\Delta}f} * \widetilde{\overline{e^{is\Delta}e^{t\Delta}f}} \,(\tau,\zeta) \\
& = \frac{1}{(2\pi)^{d-1}} \int_{(\mathbb{R}^{d})^2} e^{-t|\xi|^2}  \widehat{f}(\xi_1) \widehat{\overline{f}}(\xi_2) \delta(\tau+|\xi_1|^2-|\xi_2|^2) \delta(\zeta - \xi_1 -\xi_2) \,\mathrm{d}\xi.
\end{align*}
Expanding the $L^2$ norm gives
\begin{align*}
&  \| (-\Delta)^{\frac{2-d}{4}} |e^{is\Delta}e^{t\Delta}f|^2 \|_{L^2(\mathbb{R} \times \mathbb{R}^d)}^2 \\
& =  \frac{1}{(2\pi)^{3d-1}} \int_{(\mathbb{R}^{d})^2} \int_{(\mathbb{R}^{d})^2} e^{-t(|\xi|^2 + |\eta|^2)} |\xi_1 + \xi_2|^{2-d} \widehat{f}(\xi_1) \widehat{\overline{f}}(\xi_2)  \overline{\widehat{f}(\eta_1) \widehat{\overline{f}}(\eta_2) }  \,\, \times \\
& \qquad  \qquad\qquad  \qquad\qquad  \qquad \delta(-|\xi_1|^2 + |\xi_2|^2 + |\eta_1|^2-|\eta_2|^2) \delta(\xi_1 + \xi_2 - \eta_1 -\eta_2) \,\mathrm{d}\xi\mathrm{d}\eta
\end{align*}
and a relabelling of the variables $(\xi_1,\eta_1,\xi_2,\eta_2) \to (\xi_1,\eta_1,\eta_2,\xi_2)$ implies
\begin{align*}
\| (-\Delta)^{\frac{2-d}{4}} |e^{is\Delta}e^{t\Delta}f|^2 \|_{L^2(\mathbb{R} \times \mathbb{R}^d)}^2 = \frac{1}{(2\pi)^{3d-1}} \int_{(\mathbb{R}^{d})^2} \int_{(\mathbb{R}^{d})^2}  e^{-2t|\xi|^2} \widehat{\Pi(f)}(\xi) \overline{\widehat{\Pi(f)}(\eta)} \, \mathrm{d}\Sigma_\xi(\eta) \mathrm{d}\xi
\end{align*}
where
\[
\Pi(f) = f \otimes f(- \,\cdot)
\]
and
\[
\mathrm{d}\Sigma_\xi(\eta)  =  |\xi_1 + \eta_2|^{2-d} \delta(|\eta_1|^2 + |\eta_2|^2-|\xi_1|^2-|\xi_2|^2) \delta(\eta_1  -\eta_2 - (\xi_1 - \xi_2))  \mathrm{d}\eta.
\]
Note that $\Pi(f)$ and $\mathrm{d}\Sigma_\xi$ are not the same as in Theorem \ref{t:QSchro}; importantly, however, we do have the following analogue of Lemma \ref{l:P1Schro}.
\begin{lemma} \label{l:P1OT}
For each $\xi \in (\mathbb{R}^{d})^2$ we have
\begin{equation*}
\int_{(\mathbb{R}^{d})^2} \mathrm{d}\Sigma_\xi = \frac{|\mathbb{S}^{d-1}|}{4}\,.
\end{equation*}
\end{lemma}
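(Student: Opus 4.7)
The strategy is to integrate out the two delta functions in succession, reducing to a rotationally invariant spherical integral that can be evaluated in closed form.

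First I would use the affine delta $\delta(\eta_1 - \eta_2 - (\xi_1-\xi_2))$ to eliminate $\eta_1$, substituting $\eta_1 = \eta_2 + \xi_1 - \xi_2$ into the remaining delta. Expanding $|\eta_2+\xi_1-\xi_2|^2$ and completing the square in $\eta_2$, the argument of the surviving delta simplifies to
\[
2\bigl|\eta_2 + \tfrac{1}{2}(\xi_1-\xi_2)\bigr|^2 - \tfrac{1}{2}|\xi_1+\xi_2|^2.
\]
A change of variables $u = \eta_2 + \tfrac{1}{2}(\xi_1-\xi_2)$, together with the notation $w = \tfrac{1}{2}(\xi_1+\xi_2)$, gives the key algebraic identity $\xi_1+\eta_2 = u + w$, so the kernel $|\xi_1+\eta_2|^{2-d}$ becomes exactly $|u+w|^{2-d}$ and the integral reduces to
\[
\tfrac{1}{2}\int_{\mathbb{R}^d} |u+w|^{2-d}\,\delta(|u|^2 - |w|^2)\,du.
\]

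Next I would pass to polar coordinates $u = \rho\omega$ and use $\delta(|u|^2-|w|^2) = \tfrac{1}{2|w|}\delta(\rho-|w|)$ to localize to the sphere $|u|=|w|$. Writing $r = |w|$ and $\hat{w} = w/|w|$, the relation $|r\omega+w|^2 = 2r^2(1+\omega\cdot\hat{w})$ together with the Jacobian factor $\rho^{d-1}$ produces a factor $r^{d-2}$ that is exactly cancelled by the kernel; this is the arithmetic origin of the Ozawa--Tsutsumi weight $(-\Delta)^{(2-d)/4}$. The result is
\[
\frac{2^{(2-d)/2}}{4}\int_{\mathbb{S}^{d-1}} (1+\omega\cdot\hat{w})^{(2-d)/2}\,d\omega,
\]
which is independent of $\xi$ by rotational invariance of surface measure.

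Finally, I would evaluate the spherical integral via the half-angle identities $1+\cos\theta = 2\cos^2(\theta/2)$ and $\sin\theta = 2\sin(\theta/2)\cos(\theta/2)$, reducing (after the standard slicing $d\omega = |\mathbb{S}^{d-2}|\sin^{d-2}\theta\,d\theta$) to $|\mathbb{S}^{d-2}|\,2^{d/2-1}\int_0^\pi \sin^{d-2}(\theta/2)\,d\theta$. The remaining Wallis integral equals $|\mathbb{S}^{d-1}|/|\mathbb{S}^{d-2}|$ and all the powers of two collapse to give $|\mathbb{S}^{d-1}|/4$.

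The only real obstacle is verifying that the $\xi$-dependence genuinely disappears. This is not a coincidence: the kernel $|\xi_1+\eta_2|^{2-d}$ is precisely tuned to absorb the $|w|^{d-2}$ arising from evaluating the quadratic delta on the sphere $|u|=|w|$, which is exactly why the Ozawa--Tsutsumi inequality singles out this particular order of Sobolev derivative. Once one notices this cancellation, the rest of the proof is a direct calculation.
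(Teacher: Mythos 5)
Your computation is correct, and the overall strategy is the same as the paper's: integrate out the linear delta first, then resolve the quadratic delta in polar coordinates. The difference is the choice of variables. You eliminate $\eta_1$ directly and complete the square, which centres the resulting sphere $|u|=|w|$ at the origin but leaves the non-radial kernel $|u+w|^{2-d}$, forcing you to evaluate $\int_{\mathbb{S}^{d-1}}(1+\omega\cdot\hat{w})^{(2-d)/2}\,\mathrm{d}\omega$ by the half-angle/Wallis argument. The paper instead substitutes $(\zeta_1,\zeta_2)=(\eta_1+\xi_2,\eta_2+\xi_1)$, which makes the kernel purely radial ($|\zeta_2|^{2-d}$) at the cost of an off-centre sphere $|\zeta_2|^2=\zeta_2\cdot(\xi_1+\xi_2)$; in polar coordinates the kernel and Jacobian then cancel against the delta's scaling factor exactly, and the $r$-integral collapses to the indicator of the hemisphere $\omega\cdot(\xi_1+\xi_2)>0$, giving $\tfrac{1}{2}\cdot\tfrac{|\mathbb{S}^{d-1}|}{2}$ with no further integral to evaluate. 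Both routes expose the same structural point you highlight --- the exponent $2-d$ is exactly tuned so that the $\xi$-dependence cancels --- but the paper's substitution makes that cancellation happen one step earlier and avoids the beta-integral entirely. Your verification of the constant (all powers of $2$ collapsing to $\tfrac14$) checks out.
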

\begin{proof}
The change of variables $(\zeta_1,\zeta_2) = (\eta_1+\xi_2,\eta_2+\xi_1)$ gives
\begin{align*}
\int_{(\mathbb{R}^{d})^2} \mathrm{d}\Sigma_\xi(\eta)  & = \int_{(\mathbb{R}^{d})^2}  |\zeta_2|^{2-d} \delta(|\zeta_1 - \xi_2|^2 + |\zeta_2 - \xi_1|^2-|\xi_1|^2-|\xi_2|^2) \delta(\zeta_1  -\zeta_2)  \,\mathrm{d}\zeta \\
& = \frac{1}{2} \int_{\mathbb{R}^{d}}  |\zeta_2|^{2-d} \delta(|\zeta_2|^2 - \zeta_2 \cdot (\xi_1 + \xi_2)) \,\mathrm{d}\zeta_2
\end{align*}
and changing to polar coordinates gives
\begin{align*}
\int_{(\mathbb{R}^{d})^2} \mathrm{d}\Sigma_\xi(\eta)  = \frac{1}{2} \int_{\mathbb{S}^{d-1}} \int_0^\infty  \delta(r - \omega \cdot (\xi_1 + \xi_2)) \mathrm{d}r \mathrm{d}\omega = \frac{|\mathbb{S}^{d-1}| }{4} 
\end{align*}
as claimed.
\end{proof}
By Lemma \ref{l:P1OT} and the symmetry relation $\mathrm{d}\Sigma_\eta(\xi)\mathrm{d}\eta = \mathrm{d}\Sigma_\xi(\eta)\mathrm{d}\xi$,
\begin{align*}
\frac{(2\pi)^{2d}|\mathbb{S}^{d-1}|}{4}  \|e^{t\Delta}f\|_{L^2(\mathbb{R}^d)}^4  & =  \int_{(\mathbb{R}^{d})^2}\int_{(\mathbb{R}^{d})^2} e^{-2t|\xi|^2} |\widehat{\Pi(f)}(\xi)|^2   \, \mathrm{d}\Sigma_\xi(\eta) \mathrm{d}\xi \\
& = \frac{1}{2} \int_{(\mathbb{R}^{d})^2}\int_{(\mathbb{R}^{d})^2} e^{-2t|\xi|^2} \big( |\widehat{\Pi(f)}(\xi)|^2 + |\widehat{\Pi(f)}(\eta)|^2 \big)   \, \mathrm{d}\Sigma_\xi(\eta) \mathrm{d}\xi
\end{align*}
from which we obtain
\begin{align*}
Q(t) = \frac{1}{2(2\pi)^{3d-1}}  \int_{(\mathbb{R}^{d})^2}\int_{(\mathbb{R}^{d})^2}  e^{-2t|\xi|^2}  |\widehat{\Pi(f)}(\xi) - \widehat{\Pi(f)}(\eta)|^2 \, \mathrm{d}\Sigma_\xi(\eta) \mathrm{d}\xi
\end{align*}
and hence $Q$ is completely monotone (decreasing).
\end{proof}

In the forthcoming subsections, we prove analogous results to Theorem \ref{t:QSchro} (and Corollary \ref{c:StrichartzSchro}) for the wave and Klein--Gordon equations.

\subsection{The wave equation}
We consider pairs $(m,d)$ such that $d \geq 3$ and $m \geq 2$, or $d=2$ and $m \geq 3$. Define an associated exponent $\beta \geq 0$ by
\[
\beta = \beta(m) = \frac{(d-1)(m-1) - 2}{2}
\]
and a constant $\mbox{A}(m,d)$ by
\begin{equation*}
\mbox{A}(2,d) = \frac{|\mathbb{S}^{d-1}|}{2^{d-2}}
\end{equation*}
and for $m \geq 3$,
\begin{equation*}
\mbox{A}(m,d) = \frac{|\mathbb{S}^{d-1}|^{m-1}}{2^{2\beta(m) + 1}} \prod_{j=2}^{m-1} \mbox{B}(d-1,\beta(j) + 1) .
\end{equation*}
Here, $\mbox{B}(x,y) = \int_0^1 r^{x-1}(1-r)^{y-1}\,\mathrm{d}r$ is the beta function.
\begin{theorem} \label{t:Qwave}
Suppose either $d \geq 3$ and $m \geq 2$, or $d=2$ and $m \geq 3$. For any $f \in \Upsilon_m$ the function $Q : (0,\infty) \to \mathbb{R}$ given by
\[
Q(t) = \emph{W}(m,d) \mathfrak{I}_m(e^{-tD}f) - \| e^{isD} e^{-tD} f \|_{L^{2m}(\mathbb{R} \times \mathbb{R}^{d})}^{2m}
\]
is completely monotone (decreasing). Here, the constant $\emph{W}(m,d)$ is given by
\[
\emph{W}(m,d) = \frac{2^{\beta(m)}}{(2\pi)^{(2m-1)d-1}}\emph{A}(m,d),
\]
$\Pi(f)$ is the $m$-fold tensor product
\begin{equation*}
\Pi(f) = D^{\frac{1}{2}} f \otimes \cdots \otimes D^{\frac{1}{2}}f
\end{equation*}
and
\begin{equation*}
K(\xi) = \bigg( \sum_{1 \leq i<j \leq m} (|\xi_i||\xi_j| - \xi_i \cdot
\xi_j) \bigg)^{\frac{1}{2}((d-1)(m-1) - 2)}
\end{equation*}
\end{theorem}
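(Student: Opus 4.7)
The plan is to mirror the three-step template of Theorems \ref{t:QSchro} and \ref{t:QOT}, replacing the paraboloid and its Galilean symmetries with the forward light cone and its Lorentz symmetries. Define
\[
\mathrm{d}\Sigma_\xi(\eta) = \delta\bigg(\sum_{j=1}^m \xi_j - \sum_{j=1}^m \eta_j\bigg)\delta\bigg(\sum_{j=1}^m |\xi_j| - \sum_{j=1}^m |\eta_j|\bigg)\mathrm{d}\eta,
\]
the cone analogue of the measure appearing in the proof of Theorem \ref{t:QSchro}.

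First, using $\widehat{e^{isD}e^{-tD}f}(\xi) = e^{is|\xi|}e^{-t|\xi|}\widehat{f}(\xi)$, expanding the $L^{2m}$ norm on the Fourier side, and integrating out $x$ and $s$ produces exactly the two delta functions in $\mathrm{d}\Sigma_\xi$. Since $\sum_j|\xi_j|=\sum_j|\eta_j|$ on its support, the heat weights combine, yielding
\[
\|e^{isD}e^{-tD}f\|_{L^{2m}(\mathbb{R}\times\mathbb{R}^d)}^{2m} = \frac{1}{(2\pi)^{(2m-1)d-1}}\int\int e^{-2t\sum_j|\xi_j|}\widehat{\Pi_0(f)}(\xi)\overline{\widehat{\Pi_0(f)}(\eta)}\,\mathrm{d}\Sigma_\xi(\eta)\mathrm{d}\xi,
\]
with $\Pi_0(f)=f\otimes\cdots\otimes f$ (a bare tensor product, no $D^{1/2}$ absorbed).

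The main technical step, and the principal obstacle, is the cone analogue of Lemma \ref{l:P1Schro}:
\[
\int_{(\mathbb{R}^d)^m}\mathrm{d}\Sigma_\xi = 2^{\beta(m)}\mathrm{A}(m,d)\prod_{j=1}^m|\xi_j|\,K(\xi).
\]
Unlike the parabolic case, the cone measure $\delta(\tau-|\eta|)\mathrm{d}\eta\mathrm{d}\tau$ is not invariant under translations, only under Lorentz boosts, so Foschi's Galilean reduction is unavailable. I would proceed by induction on $m$, viewing the left-hand side as the $m$-fold convolution of this cone measure evaluated at $\bigl(\sum|\xi_j|,\sum\xi_j\bigr)$. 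At each step, one parametrizes the new spatial variable in polar coordinates adapted to the running partial sum, uses the angular delta to perform the sphere integration, and then evaluates the radial integration via beta integrals of the form $\int_0^R \rho^{a}(R-\rho)^{b}\mathrm{d}\rho = R^{a+b+1}\mathrm{B}(a+1,b+1)$; the parameters $a,b$ track the current dimension of the convolved measure, which is precisely why the product $\prod_{j=2}^{m-1}\mathrm{B}(d-1,\beta(j)+1)$ appears in $\mathrm{A}(m,d)$. After $m-1$ such steps, the Lorentz-invariant combination $\bigl(\sum|\xi_j|\bigr)^2 - \bigl|\sum\xi_j\bigr|^2 = 2\sum_{i<j}(|\xi_i||\xi_j|-\xi_i\cdot\xi_j)$ is raised to the power $\beta(m)$, matching $K(\xi)$ up to the factor $2^{\beta(m)}$, while the extra $\prod_j|\xi_j|$ reflects our use of Lebesgue measure on the cone rather than the Lorentz-invariant $|\eta|^{-1}\mathrm{d}\eta$.

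Granted the mass lemma, the remainder parallels the Schr\"odinger proof. Since $\widehat{\Pi(e^{-tD}f)}(\xi)=\prod_j|\xi_j|^{1/2}e^{-t|\xi_j|}\widehat{f}(\xi_j)$ and $\mathrm{W}(m,d) = 2^{\beta(m)}\mathrm{A}(m,d)/(2\pi)^{(2m-1)d-1}$, substituting the mass formula into the definition of $\mathfrak{I}_m$ gives
\[
\mathrm{W}(m,d)\,\mathfrak{I}_m(e^{-tD}f) = \frac{1}{(2\pi)^{(2m-1)d-1}}\int\int e^{-2t\sum_j|\xi_j|}|\widehat{\Pi_0(f)}(\xi)|^2\,\mathrm{d}\Sigma_\xi(\eta)\mathrm{d}\xi,
\]
the $D^{1/2}$'s in $\Pi$ exactly cancelling the $\prod|\xi_j|$ issued by the mass lemma. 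Symmetrizing via $\mathrm{d}\Sigma_\xi(\eta)\mathrm{d}\xi = \mathrm{d}\Sigma_\eta(\xi)\mathrm{d}\eta$ and subtracting yields
\[
Q(t) = \frac{1}{2(2\pi)^{(2m-1)d-1}}\int\int e^{-2t\sum_j|\xi_j|}\bigl|\widehat{\Pi_0(f)}(\xi) - \widehat{\Pi_0(f)}(\eta)\bigr|^2\,\mathrm{d}\Sigma_\xi(\eta)\mathrm{d}\xi,
\]
which is manifestly completely monotone decreasing: each $t$-derivative brings down $-2\sum_j|\xi_j|\leq 0$ against a nonnegative measure, so $(-1)^k Q^{(k)}(t)\geq 0$ for every $k\in\mathbb{N}$.
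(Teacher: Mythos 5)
Your first step (the expansion of the $L^{2m}$ norm against the unweighted measure, with the bare tensor product $\Pi_0$) is correct, and your overall architecture mirrors the paper's. But the mass lemma on which everything rests is false, and this is exactly where the cone differs from the paraboloid. The quantity $\int_{(\mathbb{R}^d)^m}\mathrm{d}\Sigma_\xi$ is, as you say, the $m$-fold convolution of $\delta(\tau-|\eta|)\,\mathrm{d}\eta\,\mathrm{d}\tau$ evaluated at $(\sigma,\zeta)=(\sum_j|\xi_j|,\sum_j\xi_j)$, so it depends on $\xi$ only through $(\sigma,\zeta)$. Your claimed right-hand side $2^{\beta}\mathrm{A}(m,d)\prod_j|\xi_j|\,K(\xi)$ does not: $K(\xi)=(\tfrac{1}{2}(\sigma^2-|\zeta|^2))^{\beta}$ is a function of $(\sigma,\zeta)$, but $\prod_j|\xi_j|$ varies along the fibre $\{\sum\xi_j=\zeta,\ \sum|\xi_j|=\sigma\}$. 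Already for $(m,d)=(2,3)$, where $\beta=0$ and $K\equiv 1$: with $\zeta=(1,0,0)$ and $\sigma=2$ one has $|\xi_1||\xi_2|=3/4$ at $\xi=((3/2,0,0),(-1/2,0,0))$ but $|\xi_1||\xi_2|=1$ at $\xi=((1/2,\sqrt{3}/2,0),(1/2,-\sqrt{3}/2,0))$. The factor $\prod_j|\eta_j|$ separating Lebesgue measure on the cone from the Lorentz-invariant measure $|\eta|^{-1}\mathrm{d}\eta$ cannot be pulled out of the $\eta$-integral and relabelled $\prod_j|\xi_j|$; it varies over the fibre being integrated. Consequently the identity you need between $\mathrm{W}(m,d)\,\mathfrak{I}_m(e^{-tD}f)$ and the double integral fails, and with it your closing square.

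The missing idea is the corrector $\Phi$. The paper builds the Lorentz-invariant weight into the measure, taking $\mathrm{d}\Sigma_\xi(\eta)=\delta(\cdots)\delta(\cdots)\prod_j|\eta_j|^{-1/2}|\xi_j|^{-1/2}\,\mathrm{d}\eta$ (so the expansion of the $L^{2m}$ norm holds verbatim with $\Pi(f)=D^{1/2}f\otimes\cdots\otimes D^{1/2}f$ in place of $\Pi_0$), and sets $\Phi(\xi,\eta)=(\prod_j|\xi_j|/\prod_j|\eta_j|)^{1/2}$. Then $\Phi\,\mathrm{d}\Sigma_\xi$ has density $\prod_j|\eta_j|^{-1}$ against the deltas, its total mass genuinely depends only on $\sigma^2-|\zeta|^2$, and Lemma~3.1 of \cite{BR} (whose proof is essentially the beta-function induction you outline) gives $\int\Phi\,\mathrm{d}\Sigma_\xi=2^{\beta}\mathrm{A}(m,d)K(\xi)$, which is Lemma~\ref{l:P1Wave}. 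The symmetrization must then be weighted accordingly: using $\Phi(\eta,\xi)=\Phi(\xi,\eta)^{-1}$ one completes the square as $|\widehat{\Pi(f)}(\xi)\Phi(\xi,\eta)^{1/2}-\widehat{\Pi(f)}(\eta)\Phi(\xi,\eta)^{-1/2}|^2$ rather than $|\widehat{\Pi_0(f)}(\xi)-\widehat{\Pi_0(f)}(\eta)|^2$. With that modification the complete monotonicity follows exactly as you describe.
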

Recall that we are using the familiar notation $D = \sqrt{-\Delta}$. In the special cases $(m,d) \in \{(3,2),(2,3)\}$ where $\beta = 0$ we have
$$
\mathfrak{I}_m(f) = (2\pi)^{dm}\|f\|_{\dot{H}^{\frac{1}{2}}(\mathbb{R}^d)}^{2m},
$$
where $\dot{H}^{\frac{1}{2}}(\mathbb{R}^d)$ denotes the homogeneous Sobolev space of order $\frac{1}{2}$, and Theorem \ref{t:Qwave} immediately yields the following.
\begin{corollary} \label{c:Strichartzwave}
Suppose $(p,d) \in \{(6,2),(4,3)\}$ and $f \in \dot{H}^{\frac{1}{2}}(\mathbb{R}^d)$. Then the function $Q : (0,\infty) \to \mathbb{R}$ given by
\[
Q(t) = C_{p}^p \|e^{-tD}f\|_{\dot{H}^{\frac{1}{2}}(\mathbb{R}^d)}^p - \| e^{isD} e^{-tD}f \|_{L^{p}(\mathbb{R} \times \mathbb{R}^{d})}^{p}
\]
is completely monotone (decreasing), where
\[
C_4 = (2\pi)^{-1/4} \qquad \text{and} \qquad C_6 = (2\pi)^{-1/6}.
\]
\end{corollary}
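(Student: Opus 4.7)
The plan is to obtain Corollary \ref{c:Strichartzwave} as a direct specialization of Theorem \ref{t:Qwave} to the two pairs $(m,d) \in \{(3,2),(2,3)\}$, which are precisely the pairs for which $\beta(m) = \frac{(d-1)(m-1)-2}{2}$ vanishes. In that degenerate situation the kernel
\[
K(\xi) = \bigg( \sum_{1 \leq i<j \leq m} (|\xi_i||\xi_j| - \xi_i \cdot \xi_j) \bigg)^{\beta(m)}
\]
is identically $1$, and the tensor-product structure $\widehat{\Pi(f)}(\xi_1,\ldots,\xi_m) = \prod_{k=1}^m |\xi_k|^{1/2}\widehat{f}(\xi_k)$ lets the functional $\mathfrak{I}_m$ factor as a product of one-variable integrals, yielding via Plancherel
\[
\mathfrak{I}_m(f) = \bigg(\int_{\mathbb{R}^d} |\xi|\,|\widehat{f}(\xi)|^2 \,\mathrm{d}\xi\bigg)^m = (2\pi)^{dm}\|f\|_{\dot{H}^{1/2}(\mathbb{R}^d)}^{2m},
\]
as already asserted in the paragraph preceding the corollary. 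Since $D^{1/2}$ commutes with $e^{-tD}$, the monotone quantity from Theorem \ref{t:Qwave} becomes
\[
Q(t) = \mbox{W}(m,d)(2\pi)^{dm}\|e^{-tD}f\|_{\dot{H}^{1/2}(\mathbb{R}^d)}^{2m} - \|e^{isD}e^{-tD}f\|_{L^{2m}(\mathbb{R}\times\mathbb{R}^d)}^{2m},
\]
and its complete monotonicity is delivered directly by the theorem with $p = 2m$.

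What remains is constant bookkeeping, i.e.\ verifying $C_p^p = \mbox{W}(m,d)(2\pi)^{dm}$ in both cases. For $(m,d) = (2,3)$ the formula gives immediately $\mbox{A}(2,3) = |\mathbb{S}^2|/2 = 2\pi$, so $\mbox{W}(2,3) = (2\pi)^{-7}$ and $\mbox{W}(2,3)(2\pi)^{6} = (2\pi)^{-1}$, forcing $C_4 = (2\pi)^{-1/4}$. For $(m,d) = (3,2)$ one needs the single beta-function factor $\mbox{B}(d-1,\beta(2)+1) = \mbox{B}(1,\tfrac{1}{2}) = 2$, which combines with $|\mathbb{S}^1|^2/2 = 2\pi^2$ to give $\mbox{A}(3,2) = (2\pi)^2$ and hence $\mbox{W}(3,2) = (2\pi)^{-7}$; the same cancellation yields $\mbox{W}(3,2)(2\pi)^6 = (2\pi)^{-1}$ and $C_6 = (2\pi)^{-1/6}$.

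There is no genuine analytic obstacle here, since the substantive content---complete monotonicity with the correct kernel and sharp constant---has already been done inside Theorem \ref{t:Qwave}. The only thing requiring attention is tracking the powers of $2\pi$ arising from the Plancherel identity and from the definition of $\mbox{W}(m,d)$; a single miscounted factor would falsify $C_p$. Beyond that, the corollary is a one-line specialization.
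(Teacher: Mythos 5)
Your proposal is correct and follows exactly the paper's route: the paper likewise obtains the corollary as an immediate specialization of Theorem \ref{t:Qwave} to $(m,d)\in\{(3,2),(2,3)\}$, where $\beta=0$ forces $K\equiv 1$ and Plancherel gives $\mathfrak{I}_m(f)=(2\pi)^{dm}\|f\|_{\dot{H}^{1/2}}^{2m}$. Your constant bookkeeping ($\mbox{A}(2,3)=2\pi$, $\mbox{B}(1,\tfrac12)=2$, $\mbox{A}(3,2)=(2\pi)^2$, $\mbox{W}(m,d)=(2\pi)^{-7}$ in both cases) checks out and is in fact more explicit than what the paper records.
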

The sharp Strichartz inequalities obtained by comparing $Q(t)$ as $t \to 0$ and $t \to \infty$ in Corollary \ref{c:Strichartzwave} are
\begin{equation} \label{e:Foschiwave2}
\| e^{isD} f \|_{L^{6}(\mathbb{R} \times \mathbb{R}^{2})} \leq \frac{1}{(2\pi)^{\frac{1}{6}}} \|f\|_{\dot{H}^{\frac{1}{2}}(\mathbb{R}^2)}
\end{equation}
and
\begin{equation} \label{e:Foschiwave3}
\| e^{isD} f \|_{L^{4}(\mathbb{R} \times \mathbb{R}^{3})} \leq \frac{1}{(2\pi)^{\frac{1}{4}}} \|f\|_{\dot{H}^{\frac{1}{2}}(\mathbb{R}^3)}
\end{equation}
and these were first obtained by Foschi \cite{Foschi}. In \cite{Foschi}, a full characterisation of the extremisers was also found; in particular, the initial data
\begin{equation*}
f(x) = (1+ |x|^2)^{-\frac{1}{2}(d-1)}
\end{equation*}
is extremal for \eqref{e:Foschiwave2} and \eqref{e:Foschiwave3}, with $d=2$ and $d=3$, respectively. Equivalently, $f$ such that
\begin{equation*}
\widehat{f}(\xi) = \frac{1}{|\xi|} e^{-|\xi|}
\end{equation*}
is extremal for both \eqref{e:Foschiwave2} and \eqref{e:Foschiwave3}, and the relevance of the flow $e^{-tD}f$ becomes more apparent.

For general $d$ and $m$ under consideration in Theorem \ref{t:Qwave}, the underlying sharp inequality is an analogue of \eqref{e:Carneiro} for the wave equation and this was proved in \cite{BR}.

\begin{proof}[Proof of Theorem \ref{t:Qwave}]
We have
\begin{align*}
\| e^{isD} e^{-tD} f \|_{L^{2m}(\mathbb{R} \times \mathbb{R}^{d})}^{2m}  & =  \int_{\mathbb{R}^{d+1}} \bigg|  \frac{1}{(2\pi)^d} \int_{\mathbb{R}^d} e^{ix \cdot \xi} e^{is|\xi|} e^{-t|\xi|} \widehat{f}(\xi) \, \mathrm{d}\xi \bigg|^{2m} \, \mathrm{d}x\mathrm{d}s \\
&  = \frac{1}{(2\pi)^{(2m-1)d-1}} \int_{(\mathbb{R}^d)^m} \int_{(\mathbb{R}^d)^m} e^{-2t\sum_{j=1}^m |\xi_j|} \widehat{\Pi(f)}(\xi) \overline{\widehat{\Pi(f)}(\eta)} \, \mathrm{d}\Sigma_\xi(\eta) \mathrm{d}\xi,
\end{align*}
where
$$
\mathrm{d}\Sigma_\xi(\eta) = \delta\bigg(\sum_{j=1}^m \xi_j - \sum_{j=1}^m \eta_j \bigg)\delta\bigg(\sum_{j=1}^m |\xi_j| - \sum_{j=1}^m |\eta_j|\bigg) \prod_{j=1}^m |\eta_j|^{-\frac{1}{2}} |\xi_j|^{-\frac{1}{2}}  \mathrm{d}\eta.
$$
\begin{lemma} \label{l:P1Wave}
If
\[
\Phi(\xi,\eta) = \left(\frac{\prod_{j=1}^m |\xi_j|}{\prod_{j=1}^m |\eta_j|} \right)^{\frac{1}{2}}
\]
then
\begin{equation} \label{e:P1wave}
\int_{(\mathbb{R}^d)^m} \Phi \,\mathrm{d}\Sigma_\xi = 2^{\beta} \emph{A}(m,d) K(\xi).
\end{equation}
\end{lemma}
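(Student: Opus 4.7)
The plan is to recognise the integral in \eqref{e:P1wave} as an explicit evaluation of an $m$-fold convolution of the Lorentz-invariant measure on the forward light cone, in close analogy with the proof of Lemma \ref{l:P1Schro}. Observe first that $\Phi$ is tuned precisely to cancel the factor $\prod_j |\xi_j|^{-1/2}$ in $\mathrm{d}\Sigma_\xi$, leaving
\[
\Phi(\xi,\eta)\,\mathrm{d}\Sigma_\xi(\eta) = \delta\bigg(\sum_{j=1}^{m}\xi_j - \sum_{j=1}^{m}\eta_j\bigg)\,\delta\bigg(\sum_{j=1}^{m}|\xi_j| - \sum_{j=1}^{m}|\eta_j|\bigg)\,\prod_{j=1}^{m}\frac{\mathrm{d}\eta_j}{|\eta_j|}.
\]
Thus, writing $\mathrm{d}\mu(\tau,\eta) = |\eta|^{-1}\delta(\tau - |\eta|)\,\mathrm{d}\tau\,\mathrm{d}\eta$ for the standard Lorentz-invariant measure on $\{\tau = |\eta|>0\}$, the left-hand side of \eqref{e:P1wave} is the density of the $m$-fold convolution $\mathrm{d}\mu^{*m}$ evaluated at the forward timelike point $(\sigma,\zeta) := (\sum_j |\xi_j|,\,\sum_j \xi_j)$.

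By Lorentz invariance of $\mathrm{d}\mu$ (and hence of $\mathrm{d}\mu^{*m}$), this density is a function of the Minkowski squared-length $\sigma^2 - |\zeta|^2$ alone. A scaling check — both sides being homogeneous of degree $(d-1)(m-1) - 2 = 2\beta(m)$ in $(\sigma,\zeta)$ — then forces
\[
\mathrm{d}\mu^{*m}(\sigma,\zeta) = c_m\,(\sigma^2 - |\zeta|^2)^{\beta(m)}
\]
for some constant $c_m$ depending only on $m$ and $d$. Combining this with the identity
\[
\sigma^2 - |\zeta|^2 = \bigg(\sum_j|\xi_j|\bigg)^{\!2} - \bigg|\sum_j\xi_j\bigg|^{2} = 2 \sum_{1\le i<j\le m}(|\xi_i||\xi_j|-\xi_i\cdot\xi_j),
\]
we deduce $\int\Phi\,\mathrm{d}\Sigma_\xi = c_m\,2^{\beta(m)} K(\xi)$, so the task reduces to showing $c_m = \mbox{A}(m,d)$.

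To identify $c_m$ we may work at $\zeta=0$, where $\mathrm{d}\mu^{*m}(\sigma,0) = c_m\,\sigma^{2\beta(m)}$, and proceed by induction on $m$. For the base case $m=2$ (or $m=3$ when $d=2$), the constraints $\eta_1+\eta_2=0$ and $|\eta_1|+|\eta_2|=\sigma$ force $|\eta_1|=|\eta_2|=\sigma/2$, and a direct polar-coordinate calculation gives $c_2 = |\mathbb{S}^{d-1}|/2^{d-2} = \mbox{A}(2,d)$. For the induction step, apply $\mathrm{d}\mu^{*m} = \mathrm{d}\mu^{*(m-1)} * \mathrm{d}\mu$ at $(\sigma,0)$, integrate out the auxiliary time variable against the $\delta$ in $\mathrm{d}\mu$, and pass to polar coordinates $|\zeta'|=r$ with the substitution $r = \sigma s/2$. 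The radial integral is a standard beta-function integral, producing the recursion
\[
c_m = c_{m-1}\cdot \frac{|\mathbb{S}^{d-1}|}{2^{d-1}}\,\mbox{B}\bigl(d-1,\beta(m-1)+1\bigr).
\]
Telescoping against the base case reproduces exactly the stated formula for $\mbox{A}(m,d)$.

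The main obstacle is purely bookkeeping: correctly tracking the powers of $2$, the factors of $|\mathbb{S}^{d-1}|$, and the beta-function arguments through the iteration, with a small additional care needed to start the induction at $m=3$ in the borderline dimension $d=2$. Conceptually nothing beyond Lorentz invariance, scaling, and one polar integration is needed, and the structure faithfully mirrors Lemma \ref{l:P1Schro} with the paraboloid replaced by the light cone and Galilean invariance by Lorentz invariance.
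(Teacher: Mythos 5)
Your proof is correct, and the first step --- observing that $\Phi$ cancels the $\prod_j|\xi_j|^{-1/2}|\eta_j|^{-1/2}$ weight so that $\Phi\,\mathrm{d}\Sigma_\xi$ becomes the $m$-fold convolution of the Lorentz-invariant cone measure evaluated at $(\sigma,\zeta)=(\sum_j|\xi_j|,\sum_j\xi_j)$ --- is exactly how the paper sets things up. The difference is that the paper then simply quotes Lemma 3.1 of \cite{BR} for the identity
\[
\int_{(\mathbb{R}^d)^m}\prod_{j=1}^m|\eta_j|^{-1}\,\delta\Bigl(\sigma-\sum_j|\eta_j|\Bigr)\delta\Bigl(\zeta-\sum_j\eta_j\Bigr)\,\mathrm{d}\eta=\mbox{A}(m,d)(\sigma^2-|\zeta|^2)^{\beta},
\]
whereas you supply a self-contained derivation of it via Lorentz invariance, homogeneity (the degree count $m(d-1)-(d+1)=2\beta(m)$ checks out), and an induction on $m$ whose step is a single polar/beta-function integral. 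I verified your recursion $c_m=c_{m-1}\,\tfrac{|\mathbb{S}^{d-1}|}{2^{d-1}}\,\mbox{B}(d-1,\beta(m-1)+1)$ and base case $c_2=|\mathbb{S}^{d-1}|/2^{d-2}$: telescoping gives the exponent $d-2+(d-1)(m-2)=2\beta(m)+1$, reproducing $\mbox{A}(m,d)$ exactly, and the final factor of $2^{\beta}$ comes correctly from $\sigma^2-|\zeta|^2=2\sum_{i<j}(|\xi_i||\xi_j|-\xi_i\cdot\xi_j)$. Your argument buys a fully self-contained proof in the spirit of Lemma \ref{l:P1Schro} (with Galilean invariance of the paraboloid measure replaced by Lorentz invariance of the cone measure), at the cost of the bookkeeping you acknowledge, including the minor care needed in $d=2$ where $\beta(2)<0$ and the induction effectively starts at $m=3$.
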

\begin{proof}
Lemma 3.1 in \cite{BR} implies that
\[
\int_{(\mathbb{R}^d)^m} \prod_{j=1}^m |\eta_j|^{-1} \delta\bigg(\sigma - \sum_{j=1}^m |\eta_j|\bigg) \delta\bigg(\zeta - \sum_{j=1}^m \eta_j \bigg) \, \mathrm{d}\eta = \mbox{A}(m,d) (\sigma^2 - |\zeta|^2)^{\beta}
\]
and applying this with
\[
(\sigma,\zeta) = \bigg(\sum_{j=1}^m |\xi_j|, \sum_{j=1}^m \xi_j\bigg)
\]
we obtain \eqref{e:P1wave}.
\end{proof}
By Lemma \ref{l:P1Wave},
\begin{align*}
\mbox{W}(m,d) \mathfrak{I}_m(e^{-tD}f) & = \mbox{W}(m,d) \int_{(\mathbb{R}^d)^m} e^{-2t\sum_{j=1}^m |\xi_j|} |\widehat{\Pi(f)}(\xi)|^2 K(\xi)\, \mathrm{d}\xi \\
& = \frac{1}{(2\pi)^{(2m-1)d-1}}   \int_{(\mathbb{R}^d)^m} \int_{(\mathbb{R}^d)^m} e^{-2t\sum_{j=1}^m |\xi_j|} |\widehat{\Pi(f)}(\xi)|^2 \Phi(\xi,\eta) \, \mathrm{d}\Sigma_\xi(\eta) \mathrm{d}\xi
\end{align*}
and since $\Phi(\eta,\xi) = \Phi(\xi,\eta)^{-1}$, it follows that $\mbox{W}(m,d) \mathfrak{I}_m(e^{-tD}f)$ coincides with
\[
\frac{1}{2(2\pi)^{(2m-1)d-1}}  \int_{(\mathbb{R}^d)^m} \int_{(\mathbb{R}^d)^m} e^{-2t\sum_{j=1}^m|\xi_j|} (|\widehat{\Pi(f)}(\xi)|^2\Phi(\xi,\eta) + |\widehat{\Pi(f)}(\eta)|^2\Phi(\xi,\eta)^{-1})  \, \mathrm{d}\Sigma_\xi(\eta) \mathrm{d}\xi.
\]
Hence
\[
Q(t) = \frac{1}{2(2\pi)^{(2m-1)d-1}}  \int_{(\mathbb{R}^d)^m} \int_{(\mathbb{R}^d)^m} e^{-2t\sum_{j=1}^m |\xi_j|} |\widehat{\Pi(f)}(\xi)\Phi(\xi,\eta)^{\frac{1}{2}} - \widehat{\Pi(f)}(\eta)\Phi(\xi,\eta)^{-\frac{1}{2}}|^2 \, \mathrm{d}\Sigma_\xi(\eta) \mathrm{d}\xi
\]
and $Q$ is completely monotone (decreasing).
\end{proof}

\subsection{The Klein--Gordon equation} In the case $m=2$ we can also prove an analogous monotonicity phenomenon for the Klein--Gordon propagator $e^{is\sqrt{1-\Delta}}$. To state this, it is convenient to introduce the notation
\[
\phi(\varrho) = \sqrt{1 + \varrho^2}.
\]
\begin{theorem} \label{t:QKG}
Let $d \geq 2$ and $f \in \Upsilon_2$. Then the function $Q : (0,\infty) \to \mathbb{R}$ given by
\[
Q(t) = \frac{|\mathbb{S}^{d-1}|}{2^{\frac{d-1}{2}} (2\pi)^{3d-1}} \mathfrak{I}_2(e^{-t\sqrt{1-\Delta}}f) - \| e^{is\sqrt{1-\Delta}} e^{-t\sqrt{1-\Delta}} f \|_{L^{4}(\mathbb{R} \times \mathbb{R}^{d})}^{4}
\]
is completely monotone (decreasing). Here, $\Pi(f)$ is the tensor product
\begin{equation*}
\Pi(f) = \phi(D)^{\frac{1}{2}}  f \otimes \phi(D)^{\frac{1}{2}}f
\end{equation*}
and
\begin{equation*}
K(\xi) = \frac{(\phi(|\xi_1|)\phi(|\xi_2|) - \xi_1 \cdot \xi_2 - 1)^{\frac{d-2}{2}}}{(\phi(|\xi_1|)\phi(|\xi_2|) - \xi_1 \cdot \xi_2 + 1)^{\frac{1}{2}}}.
\end{equation*}
\end{theorem}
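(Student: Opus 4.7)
The plan is to follow the three-step scheme already used in Theorems \ref{t:QSchro} and \ref{t:Qwave}. First, by Fourier inversion
\[
e^{is\sqrt{1-\Delta}}e^{-t\sqrt{1-\Delta}}f(x) = \frac{1}{(2\pi)^d}\int_{\mathbb{R}^d} e^{ix\cdot\xi}e^{is\phi(|\xi|)}e^{-t\phi(|\xi|)}\widehat{f}(\xi)\,\mathrm{d}\xi,
\]
and expanding $\|\cdot\|_{L^4}^4$ as $\int F\bar F F\bar F\,\mathrm{d}x\mathrm{d}s$, the $(x,s)$-integrations produce delta functions enforcing $\xi_1+\xi_2=\eta_1+\eta_2$ and $\phi(|\xi_1|)+\phi(|\xi_2|)=\phi(|\eta_1|)+\phi(|\eta_2|)$. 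Absorbing the factors $\phi(|\xi_j|)^{1/2}$ into $\widehat{\Pi(f)}(\xi)$ and $\phi(|\eta_j|)^{1/2}$ into the measure, we arrive at
\[
\|e^{is\sqrt{1-\Delta}}e^{-t\sqrt{1-\Delta}}f\|_{L^4}^4 = \frac{1}{(2\pi)^{3d-1}}\int_{(\mathbb{R}^d)^2}\!\int_{(\mathbb{R}^d)^2} e^{-2t\sum_{j}\phi(|\xi_j|)}\widehat{\Pi(f)}(\xi)\overline{\widehat{\Pi(f)}(\eta)}\,\mathrm{d}\Sigma_\xi(\eta)\mathrm{d}\xi,
\]
where
\[
\mathrm{d}\Sigma_\xi(\eta) = \prod_{j=1}^2\phi(|\xi_j|)^{-1/2}\phi(|\eta_j|)^{-1/2}\,\delta\bigg(\sum_{j=1}^2\xi_j - \sum_{j=1}^2\eta_j\bigg)\delta\bigg(\sum_{j=1}^2\phi(|\xi_j|)-\sum_{j=1}^2\phi(|\eta_j|)\bigg)\mathrm{d}\eta.
\]
The replacement $e^{-t\sum_{j}(\phi(|\xi_j|)+\phi(|\eta_j|))}\to e^{-2t\sum_{j}\phi(|\xi_j|)}$ is legitimate on the support of the second delta.

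The heart of the matter is the Klein--Gordon analogue of Lemmas \ref{l:P1Schro} and \ref{l:P1Wave}: with the weight
\[
\Phi(\xi,\eta) = \bigg(\frac{\prod_{j=1}^2\phi(|\xi_j|)}{\prod_{j=1}^2\phi(|\eta_j|)}\bigg)^{1/2},
\]
one must show
\[
\int_{(\mathbb{R}^d)^2}\Phi(\xi,\eta)\,\mathrm{d}\Sigma_\xi(\eta) = \frac{|\mathbb{S}^{d-1}|}{2^{(d-1)/2}}K(\xi).
\]
The crucial observation is that $\Phi\,\mathrm{d}\Sigma_\xi$ equals $\prod_j \phi(|\eta_j|)^{-1}$ times the two delta functions, which is precisely the twofold convolution of the Lorentz-invariant mass-one hyperboloid measure $\delta(\tau^2-1-|\zeta|^2)\mathbf{1}_{\tau>0}\,\mathrm{d}\tau\mathrm{d}\zeta$, evaluated at the four-vector $(\sum_j\phi(|\xi_j|),\xi_1+\xi_2)$. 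By Lorentz invariance the integral therefore depends on $\xi$ only through the Mandelstam invariant
\[
s(\xi) = \bigg(\sum_{j=1}^2\phi(|\xi_j|)\bigg)^2 - |\xi_1+\xi_2|^2 = 2\bigl(1 + \phi(|\xi_1|)\phi(|\xi_2|) - \xi_1\cdot\xi_2\bigr).
\]
Passing to the centre-of-mass frame $\xi_1+\xi_2=0$ reduces the computation to a single radial integral on $\mathbb{R}^d$, which in polar coordinates evaluates to $\frac{|\mathbb{S}^{d-1}|}{2^{d-2}}(s-4)^{(d-2)/2}s^{-1/2}$. Re-expressing $s-4$ and $s$ via $A:=\phi(|\xi_1|)\phi(|\xi_2|)-\xi_1\cdot\xi_2$ gives exactly $\frac{|\mathbb{S}^{d-1}|}{2^{(d-1)/2}}\cdot\frac{(A-1)^{(d-2)/2}}{(A+1)^{1/2}}$, matching the stated $K(\xi)$ and constant.

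With this lemma in hand, the final step is routine. Using $\mathrm{d}\Sigma_\xi(\eta)\mathrm{d}\xi = \mathrm{d}\Sigma_\eta(\xi)\mathrm{d}\eta$, the relation $\Phi(\eta,\xi)=\Phi(\xi,\eta)^{-1}$, and energy conservation on the support, one rewrites $\frac{|\mathbb{S}^{d-1}|}{2^{(d-1)/2}(2\pi)^{3d-1}}\mathfrak{I}_2(e^{-t\sqrt{1-\Delta}}f)$ as
\[
\frac{1}{2(2\pi)^{3d-1}}\iint e^{-2t\sum_j\phi(|\xi_j|)}\bigl(|\widehat{\Pi(f)}(\xi)|^2\Phi(\xi,\eta) + |\widehat{\Pi(f)}(\eta)|^2\Phi(\xi,\eta)^{-1}\bigr)\mathrm{d}\Sigma_\xi(\eta)\mathrm{d}\xi,
\]
and after symmetrising the Step 1 representation likewise, subtraction produces
\[
Q(t) = \frac{1}{2(2\pi)^{3d-1}}\iint e^{-2t\sum_j\phi(|\xi_j|)}\bigl|\widehat{\Pi(f)}(\xi)\Phi(\xi,\eta)^{1/2} - \widehat{\Pi(f)}(\eta)\Phi(\xi,\eta)^{-1/2}\bigr|^2\mathrm{d}\Sigma_\xi(\eta)\mathrm{d}\xi,
\]
which is manifestly completely monotone decreasing since $\sum_j\phi(|\xi_j|)\geq 0$. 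The principal obstacle is Step 2: the Klein--Gordon hyperboloid enjoys neither the parabolic scaling of the Schr\"odinger paraboloid nor the conic scaling of the wave cone, so one must exploit its Lorentz invariance and pass to the centre-of-mass frame to explicitly evaluate the weighted mass of $\mathrm{d}\Sigma_\xi$ and recover the somewhat unusual kernel featuring both $\phi(|\xi_1|)\phi(|\xi_2|)-\xi_1\cdot\xi_2-1$ and $+1$.
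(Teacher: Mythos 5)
Your proposal is correct and follows essentially the same route as the paper: the same expansion of the $L^4$ norm with the same measure $\mathrm{d}\Sigma_\xi$, the same weight $\Phi$, the same key identity $\int\Phi\,\mathrm{d}\Sigma_\xi = \tfrac{|\mathbb{S}^{d-1}|}{2^{(d-1)/2}}K(\xi)$, and the same symmetrisation yielding the manifestly completely monotone square. The only difference is that where you sketch a self-contained proof of the convolution identity via Lorentz invariance and the centre-of-mass frame (correctly, with the right constants), the paper simply quotes Lemma 1 of Jeavons \cite{Jeavons}, which encapsulates exactly that computation.
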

\begin{corollary} \label{c:StrichartzKG}
Suppose $d \in \{2,3\}$ and $f \in H^{\frac{1}{2}}(\mathbb{R}^d)$. Then the function $Q_0 : (0,\infty) \to \mathbb{R}$ given by
\[
Q_0(t) =  C_d^4 \| e^{-t\sqrt{1-\Delta}}f \|_{H^{\frac{1}{2}}(\mathbb{R}^d)}^4 - \| e^{is\sqrt{1-\Delta}} e^{-t\sqrt{1-\Delta}} f \|_{L^{4}(\mathbb{R} \times \mathbb{R}^{d})}^{4}
\]
is completely monotone (decreasing), where
\[
C_2 = 2^{-\frac{1}{4}} \qquad \text{and} \qquad C_3 = (2\pi)^{-\frac{1}{4}}.
\]
\end{corollary}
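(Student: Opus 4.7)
The plan is to derive Corollary~\ref{c:StrichartzKG} as an immediate consequence of Theorem~\ref{t:QKG} by decomposing $Q_0 = Q + R$, where $Q$ is the completely monotone quantity produced by the theorem and $R$ is a remainder that I will show is itself completely monotone. Explicitly,
\[
R(t) = C_d^4\|e^{-t\sqrt{1-\Delta}}f\|_{H^{1/2}(\mathbb{R}^d)}^4 - \frac{|\mathbb{S}^{d-1}|}{2^{(d-1)/2}(2\pi)^{3d-1}}\,\mathfrak{I}_2(e^{-t\sqrt{1-\Delta}}f).
\]
Since the Fourier multiplier for $(1-\Delta)^{1/4}$ is $\phi(|\xi|)^{1/2}$, Plancherel shows that both summands share the density $\phi(|\xi_1|)\phi(|\xi_2|)\,e^{-2t(\phi(|\xi_1|)+\phi(|\xi_2|))}|\widehat{f}(\xi_1)|^2|\widehat{f}(\xi_2)|^2$, so
\[
R(t) = \int_{(\mathbb{R}^d)^2}\Bigl[\,C_d^4(2\pi)^{-2d} - \frac{|\mathbb{S}^{d-1}|}{2^{(d-1)/2}(2\pi)^{3d-1}}K(\xi)\Bigr]\phi(|\xi_1|)\phi(|\xi_2|)\,e^{-2t(\phi(|\xi_1|)+\phi(|\xi_2|))}|\widehat{f}(\xi_1)|^2|\widehat{f}(\xi_2)|^2\,\mathrm{d}\xi.
\]
If the bracket is pointwise nonnegative, then each $t$-derivative of $R$ multiplies the integrand by $-2(\phi(|\xi_1|)+\phi(|\xi_2|))\leq 0$ uniformly in $\xi$, making $R$ completely monotone; adding $Q$ and $R$ closes the argument. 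As a by-product, the forthcoming kernel estimate yields $\mathfrak{I}_2(f)\lesssim\|f\|_{H^{1/2}}^4$, which ensures $H^{1/2}\subset\Upsilon_2$ and so legitimises the invocation of Theorem~\ref{t:QKG}.

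The proof therefore reduces to the pointwise inequality $K(\xi)\leq\gamma_d$ with $\gamma_d := C_d^4\cdot 2^{(d-1)/2}(2\pi)^{d-1}/|\mathbb{S}^{d-1}|$; an arithmetic check with the stated constants gives $\gamma_2 = 2^{-1/2}$ and $\gamma_3 = 1$. Both cases rest on the elementary bound $(1+a^2)(1+b^2) \geq (1+ab)^2$ for $a,b\geq 0$, which yields
\[
\phi(|\xi_1|)\phi(|\xi_2|) - \xi_1\cdot\xi_2 \geq \phi(|\xi_1|)\phi(|\xi_2|) - |\xi_1||\xi_2| \geq 1.
\]
For $d=2$, the kernel is $K(\xi) = (\phi(|\xi_1|)\phi(|\xi_2|) - \xi_1\cdot\xi_2 + 1)^{-1/2}$, so the estimate above gives $K(\xi)\leq 2^{-1/2} = \gamma_2$. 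For $d=3$, the kernel equals $K(\xi) = \bigl((\phi(|\xi_1|)\phi(|\xi_2|) - \xi_1\cdot\xi_2 - 1)/(\phi(|\xi_1|)\phi(|\xi_2|) - \xi_1\cdot\xi_2 + 1)\bigr)^{1/2}$, whose numerator is nonnegative and no larger than its denominator, giving $K(\xi)\leq 1 = \gamma_3$.

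The only genuine obstacle is the bookkeeping of constants: verifying that the precise values $C_2 = 2^{-1/4}$ and $C_3 = (2\pi)^{-1/4}$ in the statement are exactly those that make the bracket in the integrand of $R$ nonnegative. No further analytic content is needed, since all of the delicate Fourier-side work has already been carried out in the proof of Theorem~\ref{t:QKG}.
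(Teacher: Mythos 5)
Your proposal is correct and follows essentially the same route as the paper: decompose $Q_0$ into the completely monotone $Q$ from Theorem \ref{t:QKG} plus a remainder $R$, express $R$ via Plancherel as an integral against the density $\phi(|\xi_1|)\phi(|\xi_2|)e^{-2t(\phi(|\xi_1|)+\phi(|\xi_2|))}|\widehat{f}(\xi_1)|^2|\widehat{f}(\xi_2)|^2$, and reduce complete monotonicity of $R$ to the pointwise bound $K(\xi)\leq \widetilde{C}_d$ with $\widetilde{C}_2=2^{-1/2}$, $\widetilde{C}_3=1$, exactly as in the paper. Your constants $\gamma_d$ match the paper's $\widetilde{C}_d$, and your verification of the kernel bound via $(1+a^2)(1+b^2)\geq(1+ab)^2$ supplies the details the paper leaves as ``easy to check.''
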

Here, $H^{\frac{1}{2}}(\mathbb{R}^d)$ denotes the inhomogeneous Sobolev space of order $\frac{1}{2}$. The sharp Strichartz inequalities contained in Corollary \ref{c:StrichartzKG} by comparing $Q_0(t)$ as $t \to 0$ and $t \to \infty$ were first proved by Quilodr\'an \cite{Q}. There are no extremisers for these sharp inequalities; however, if
\begin{equation} \label{e:KGextremisers}
\widehat{f}_a(\xi) = \frac{1}{\sqrt{1 + |\xi|^2}}e^{-a\sqrt{1 + |\xi|^2}}
\end{equation}
then $(f_a)$ is an extremising sequence as $a \to \infty$ and $a \to 0+$ for $d=2$ and $d=3$, respectively. Again, we see a connection with the associated flow $e^{-t\sqrt{1 - \Delta}}f$.

For general $d \geq 2$, the underlying sharp inequality in Theorem \ref{t:QKG} (associated with $Q$ rather than $Q_0$) was proved by Jeavons in \cite{Jeavons}, which does have extremisers; for example, the functions given in \eqref{e:KGextremisers}.

\begin{proof}[Proof of Theorem \ref{t:QKG}]
We have
\begin{align*}
\| e^{is\sqrt{1-\Delta}} e^{-t\sqrt{1-\Delta}} f \|_{L^{4}(\mathbb{R} \times \mathbb{R}^{d})}^{4}   = \frac{1}{(2\pi)^{3d-1}} \int_{(\mathbb{R}^d)^2} \int_{(\mathbb{R}^d)^2} e^{-2t\sum_{j=1}^2 \phi(|\xi_j|)} \widehat{\Pi(f)}(\xi) \overline{\widehat{\Pi(f)}(\eta)} \, \mathrm{d}\Sigma_\xi(\eta) \mathrm{d}\xi,
\end{align*}
where
$$
\mathrm{d}\Sigma_\xi(\eta) = \delta\bigg(\sum_{j=1}^2 \xi_j - \sum_{j=1}^2 \eta_j \bigg)\delta\bigg(\sum_{j=1}^2 \phi(|\xi_j|) - \sum_{j=1}^2 \phi(|\eta_j|)\bigg) \prod_{j=1}^2 \phi(|\eta_j|)^{-\frac{1}{2}} \phi(|\xi_j|)^{-\frac{1}{2}}  \mathrm{d}\eta.
$$
\begin{lemma} \label{l:P1KG}
If
\[
\Phi(\xi,\eta) = \left(\frac{\phi(|\xi_1|)\phi(|\xi_2|)}{\phi(|\eta_1|)\phi(|\eta_2|)} \right)^{\frac{1}{2}}
\]
then
\begin{equation} \label{e:P1KG}
\int_{(\mathbb{R}^d)^2} \Phi \,\mathrm{d}\Sigma_\xi = \frac{|\mathbb{S}^{d-1}|}{2^{\frac{d-1}{2}}} K(\xi).
\end{equation}
\end{lemma}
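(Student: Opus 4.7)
The plan is to follow the strategy of Lemma~\ref{l:P1Wave}: simplify $\Phi\,\mathrm{d}\Sigma_\xi$, recognize the resulting integral as the two-fold convolution of the invariant measure on the Klein--Gordon mass-one hyperboloid, use Lorentz invariance to reduce to the frame $\zeta=0$, and then perform an explicit one-dimensional calculation. For the first step, note that the $\phi(|\xi_j|)^{1/2}$ factors in $\Phi$ cancel the $\phi(|\xi_j|)^{-1/2}$ factors from $\mathrm{d}\Sigma_\xi$, while the $\phi(|\eta_j|)^{-1/2}$ factors from both combine to produce $\phi(|\eta_j|)^{-1}$. Setting $(\sigma,\zeta) = (\phi(|\xi_1|)+\phi(|\xi_2|),\,\xi_1+\xi_2)$, this gives $\int_{(\mathbb{R}^d)^2}\Phi\,\mathrm{d}\Sigma_\xi = J(\sigma,\zeta)$, where
\[
J(\sigma,\zeta) := \int_{(\mathbb{R}^d)^2}\prod_{j=1}^2\phi(|\eta_j|)^{-1}\,\delta\Bigl(\sigma-\sum_{j=1}^2\phi(|\eta_j|)\Bigr)\delta\Bigl(\zeta-\sum_{j=1}^2\eta_j\Bigr)\,\mathrm{d}\eta.
\]

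Next, I would invoke Lorentz invariance. The measure $\phi(|\eta|)^{-1}\,\mathrm{d}\eta$ is, up to a factor of $2$, the restriction of $\delta(\tau^2-|\eta|^2-1)\,\mathrm{d}\tau\mathrm{d}\eta$ to the forward sheet $\tau>0$, hence Lorentz invariant; since Lebesgue measure $\mathrm{d}\sigma\,\mathrm{d}\zeta$ on $\mathbb{R}^{d+1}$ is also Lorentz invariant, $J$ depends only on the Mandelstam-type scalar $\sigma^2-|\zeta|^2$. With $M := \sqrt{\sigma^2-|\zeta|^2}$ one then has $J(\sigma,\zeta) = J(M,0)$, and the latter is computed directly: the spatial delta function forces $\eta_2 = -\eta_1$; passing to polar coordinates and making the substitution $u = \phi(r)$, which turns $r^{d-1}\,\mathrm{d}r$ into $u(u^2-1)^{(d-2)/2}\,\mathrm{d}u$, yields
\[
J(M,0) = |\mathbb{S}^{d-1}|\int_1^\infty u^{-1}(u^2-1)^{(d-2)/2}\delta(M-2u)\,\mathrm{d}u = \frac{|\mathbb{S}^{d-1}|}{2^{d-2}M}(M^2-4)^{(d-2)/2}.
\]

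Finally, I would substitute the algebraic identities
\[
\sigma^2-|\zeta|^2 = 2\bigl(\phi(|\xi_1|)\phi(|\xi_2|)-\xi_1\cdot\xi_2+1\bigr),\qquad \sigma^2-|\zeta|^2-4 = 2\bigl(\phi(|\xi_1|)\phi(|\xi_2|)-\xi_1\cdot\xi_2-1\bigr),
\]
both of which follow from $\phi(|\xi_j|)^2=1+|\xi_j|^2$; collecting powers of $2$ in $J(M,0)$ then produces exactly $\frac{|\mathbb{S}^{d-1}|}{2^{(d-1)/2}}K(\xi)$, which is the claim. The only nontrivial step is the Lorentz-invariance reduction (which could alternatively be verified by a direct boost-by-boost change of variables on the hyperboloid); everything else closely mirrors the wave-equation calculation in Lemma~\ref{l:P1Wave} and is essentially mechanical.
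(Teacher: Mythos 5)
Your proposal is correct and follows the same initial reduction as the paper: cancelling the $\phi(|\xi_j|)^{\pm\frac{1}{2}}$ factors so that $\int\Phi\,\mathrm{d}\Sigma_\xi$ becomes the two-fold convolution $J(\sigma,\zeta)$ evaluated at $(\sigma,\zeta)=(\phi(|\xi_1|)+\phi(|\xi_2|),\xi_1+\xi_2)$. The only difference is that the paper obtains the closed form for $J$ by citing Lemma 1 of \cite{Jeavons}, whereas you prove that identity from scratch, using Lorentz invariance of $\phi(|\eta|)^{-1}\,\mathrm{d}\eta$ to reduce to the centre-of-mass frame $\zeta=0$ and then computing explicitly. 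Your computation checks out: $J(M,0)=\frac{|\mathbb{S}^{d-1}|}{2^{d-2}M}(M^2-4)^{(d-2)/2}$ agrees with the cited formula $\frac{|\mathbb{S}^{d-1}|}{2^{d-2}}(\sigma^2-|\zeta|^2-4)^{\frac{d-2}{2}}(\sigma^2-|\zeta|^2)^{-\frac{1}{2}}$, and the powers of two collect correctly since $2^{\frac{d-2}{2}-\frac{1}{2}}/2^{d-2}=2^{-\frac{d-1}{2}}$. What your version buys is self-containedness, and it is the exact Lorentz analogue of the Galilean-invariance argument the paper itself uses to prove Lemma \ref{l:P1Schro}. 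The only point worth making explicit is that $(\sigma,\zeta)$, being the sum of two points on the unit-mass hyperboloid, is forward timelike with $\sigma^2-|\zeta|^2\geq 4$, so the boost to $(M,0)$ exists and the delta function's support condition $M\geq 2$ is satisfied.
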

\begin{proof}
Lemma 1 in \cite{Jeavons} implies
\[
\int_{(\mathbb{R}^d)^2} \prod_{j=1}^2 \phi(|\eta_j|)^{-1} \delta\bigg(\sigma - \sum_{j=1}^2 \phi(|\eta_j|)\bigg) \delta\bigg(\zeta - \sum_{j=1}^2 \eta_j \bigg) \, \mathrm{d}\eta = \frac{|\mathbb{S}^{d-1}|}{2^{d-2}}  \frac{(\sigma^2 - |\zeta|^2 - 4)^{\frac{d-2}{2}}}{(\sigma^2 - |\zeta|^2)^{\frac{1}{2}}}
\]
and applying this with $(\sigma,\zeta) = (\phi(|\xi_1|) + \phi(|\xi_2|), \xi_1 + \xi_2)$ gives the desired conclusion.
\end{proof}
From Lemma \ref{l:P1KG} it follows that
\[
Q(t) = \frac{1}{2(2\pi)^{3d-1}}  \int_{(\mathbb{R}^d)^2} \int_{(\mathbb{R}^d)^2} e^{-2t\sum_{j=1}^2 \phi(|\xi_j|)} |\widehat{\Pi(f)}(\xi)\Phi(\xi,\eta)^{\frac{1}{2}} - \widehat{\Pi(f)}(\eta)\Phi(\xi,\eta)^{-\frac{1}{2}}|^2 \, \mathrm{d}\Sigma_\xi(\eta)\mathrm{d}\xi
\]
and $Q$ is completely monotone (decreasing).
\end{proof}

\begin{proof}[Proof of Corollary \ref{c:StrichartzKG}]
It suffices to show that the difference
$$
R(t) = \frac{2^{\frac{d-1}{2}} (2\pi)^{3d-1}}{|\mathbb{S}^{d-1}|}  (Q_0(t) - Q(t))
$$  
is completely monotone (decreasing). A straightforward calculation shows that
\begin{align*}
R(t) = \int_{(\mathbb{R}^d)^2} e^{-2t(\phi(|\xi_1|) + \phi(|\xi_2|))} \phi(|\xi_1|) \phi(|\xi_2|) |\widehat{f}(\xi_1)|^2  |\widehat{f}(\xi_2)|^2 (\widetilde{C}_d - K(\xi) ) \, \mathrm{d}\xi,
\end{align*}
where $\widetilde{C}_2 = \frac{1}{\sqrt{2}} $ and $\widetilde{C}_3 = 1$. It is easy to check that for each $d = 2,3$ we have the pointwise inequality $K(\xi) \leq \widetilde{C}_d$ for each $\xi \in (\mathbb{R}^d)^2$, and the completely monotonicity of $R$, and hence $Q_0$, follows.
\end{proof}

\section{Monotonicity by PDE methods}\label{sect2'}
The approach in Section \ref{sect2}, being heavily Fourier-analytic, relied crucially on the Lebesgue exponents involved being even integers. As we saw, in such situations we are able to identify monotonicity properties underlying Strichartz inequalities with sharp constants. In this section we observe that if one is prepared to sacrifice sharp constants, then monotonicity may be found for quite general Lebesgue exponents.
\begin{theorem} If $(p,q,d)$ are Schr\"odinger admissible then there exists a constant $c\geq C_{p,q}$ such that
\begin{equation*}
t\mapsto c^p
\|e^{t\Delta}f\|_{L^2(\mathbb{R}^d)}^p - \|e^{is\Delta}e^{t\Delta}f\|_{L^p_sL^q_x(\mathbb{R}\times\mathbb{R}^d)}^p
\end{equation*}
is nonincreasing.
\end{theorem}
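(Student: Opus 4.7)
Following the PDE perspective suggested around \eqref{two'}, I would set $v(s,t,x) = e^{is\Delta}e^{t\Delta}f(x)$, so that $\partial_t v = i\partial_s v = \Delta_x v$, and differentiate both summands of the target functional in $t$, comparing the resulting rates. The first summand is elementary: a standard integration by parts yields
$$
\frac{d}{dt}\|e^{t\Delta}f\|_{L^2}^p = -p\|e^{t\Delta}f\|_{L^2}^{p-2}\|\nabla e^{t\Delta}f\|_{L^2}^2.
$$
For the second summand, the identity $\partial_t|v|^2 = \Delta_x|v|^2 - 2|\nabla_x v|^2$ combined with the chain rule $\partial_t|v|^q = \tfrac{q}{2}|v|^{q-2}\partial_t|v|^2$ and a further integration by parts to discard the Laplacian piece produces
$$
\frac{d}{dt}\int_{\mathbb{R}^d}|v|^q\,dx = -\int_{\mathbb{R}^d}\Bigl[\tfrac{q(q-2)}{4}|v|^{q-4}|\nabla_x|v|^2|^2 + q|v|^{q-2}|\nabla_x v|^2 \Bigr]\,dx.
$$
Multiplying by $\tfrac{p}{q}\|v(s,t,\cdot)\|_{L^q_x}^{p-q}$ and integrating in $s$ gives an explicit negative expression for $\frac{d}{dt}\|v\|_{L^p_sL^q_x}^p$.

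The heart of the argument is then to absorb this dissipation rate into a constant multiple of $-\frac{d}{dt}\|e^{t\Delta}f\|_{L^2}^p$. Since $q\geq 2$ on the admissible line, the pointwise Kato bound $|\nabla_x|v|^2|^2 \leq 4|v|^2|\nabla_x v|^2$ collapses the bracket above to $\leq q(q-1)|v|^{q-2}|\nabla_x v|^2$. Applying H\"older's inequality in $x$ with exponents $(q/(q-2),q/2)$ and then in $s$ with $(p/(p-2),p/2)$ yields
$$
-\frac{d}{dt}\|v\|_{L^p_sL^q_x}^p \leq p(q-1)\|v\|_{L^p_sL^q_x}^{p-2}\|\nabla_x v\|_{L^p_sL^q_x}^2,
$$
and invoking the Strichartz estimate \eqref{strichschrod} separately on $e^{t\Delta}f$ and on $\nabla e^{t\Delta}f$ (permissible because $\nabla$ commutes with $e^{is\Delta}$) bounds the right-hand side by $p(q-1)C_{p,q}^p\|e^{t\Delta}f\|_{L^2}^{p-2}\|\nabla e^{t\Delta}f\|_{L^2}^2$. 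Combining with the formula for $\frac{d}{dt}\|e^{t\Delta}f\|_{L^2}^p$ from the previous paragraph gives
$$
\frac{d}{dt}\bigl(c^p\|e^{t\Delta}f\|_{L^2}^p - \|v\|_{L^p_sL^q_x}^p\bigr) \leq p\bigl[(q-1)C_{p,q}^p - c^p\bigr]\|e^{t\Delta}f\|_{L^2}^{p-2}\|\nabla e^{t\Delta}f\|_{L^2}^2,
$$
which is nonpositive whenever $c \geq (q-1)^{1/p}C_{p,q}$. This value of $c$ is the candidate constant produced by the scheme.

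The main obstacle I anticipate lies at the endpoint exponents. The case $p=\infty$ (forcing $q=2$) is degenerate but trivial by $L^2$-unitarity of $e^{is\Delta}$. The Keel--Tao endpoint $p=2$ with $d\geq 3$ collapses the H\"older step in $s$ but still closes since the factor $\|v(s)\|_{L^q_x}^{p-q}$ cancels the $\|v\|_{L^q_x}^{q-2}$ from the spatial H\"older directly. The case $q=\infty$, admissible only for $(p,d)=(4,\infty,1)$, is genuinely more delicate because the spatial differential identity and the spatial H\"older exponents both degenerate; I would expect to recover it either by taking $q\to\infty$ in the estimates for finite $q$ and using a suitable Sobolev embedding to compensate, or by a separate argument tailored to $L^\infty_x$. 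A secondary technical point is the rigorous justification of differentiation under the integral and of the integration by parts, which I would dispatch by first restricting to $f\in\CS(\mathbb{R}^d)$ so that $v$ is Schwartz in $x$ uniformly on compact $(s,t)$-sets, then extending by density using continuity in $L^2$ of both summands.
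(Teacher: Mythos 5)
Your argument is essentially identical to the paper's proof: differentiate in $t$ using the space-time system $\partial_t u = i\partial_s u = \Delta_x u$, integrate by parts to produce the dissipation terms $|u|^{q-2}|\nabla u|^2$ and $|u|^{q-4}|\nabla(|u|^2)|^2$, control the latter pointwise by the former, then apply H\"older in $x$ and in $s$ followed by two applications of the Strichartz inequality to $u$ and to $\nabla u$. Your version is in fact slightly more quantitative (the explicit candidate $c=(q-1)^{1/p}C_{p,q}$) and more attentive to the endpoint exponents, which the paper passes over in silence.
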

\begin{proof}
Consider the quantity
$$
Q(t)=c^p\Bigl(\int_{\mathbb{R}^d}|u(0,t,x)|^2 \,\mathrm{d}x\Bigr)^{p/2}-\int_{\mathbb{R}}\Bigl(\int_{\mathbb{R}^d}|u(s,t,x)|^q \,\mathrm{d}x\Bigr)^{p/q} \,\mathrm{d}s,
$$
where $u : \mathbb{R} \times (0,\infty) \times \mathbb{R}^d \to \mathbb{C}$ satisfies \eqref{e:spacetimeSchro}. Differentiating with respect to $t$ we find that
$$
p^{-1}Q'(t) = c^p\|u_0\|_2^{p-2}\Re\int_{\mathbb{R}^d}
\overline{u}_0\Delta u_0 -\Re\int_{\mathbb{R}}\Bigl(\int_{\mathbb{R}^d}|u|^q \,\mathrm{d}x\Bigr)^{p/q-1}\Bigl(\int_{\mathbb{R}^d}|u|^{q-2}\overline{u}\Delta u \,\mathrm{d}x\Bigr) \,\mathrm{d}s.
$$
Here, we are suppressing the $t$-dependence on the right-hand side and $u_0$ is the initial data of the Schr\"odinger evolution (i.e. $u_0(t,x) = u(0,t,x)$). Integrating by parts we obtain
\begin{eqnarray}\label{dualityidentity}
\begin{aligned}
p^{-1}Q'(t)&=\int_{\mathbb{R}}\Bigl(\int_{\mathbb{R}^d}|u|^q \,\mathrm{d}x\Bigr)^{p/q-1}\left\{\int_{\mathbb{R}^d}|u|^{q-2}|\nabla u|^2 \,\mathrm{d}x+\left(\frac{q-2}{4}\right)\int_{\mathbb{R}^d}
|u|^{q-4}|\nabla(|u|^2)|^2 \,\mathrm{d}x\right\} \,\mathrm{d}s\\&\;\;\;\;\;\;\;\;-c^p\|u_0\|_2^{p-2}\|\nabla u_0\|_2^2\\
\end{aligned}
\end{eqnarray}
Since
$$
\int_{\mathbb{R}^d}
|u|^{q-4}|\nabla(|u|^2)|^2 \,\mathrm{d}x\lesssim \int_{\mathbb{R}^d}|u|^{q-2}|\nabla u|^2 \,\mathrm{d}x,
$$
it suffices to show that
$$
\int_{\mathbb{R}}\Bigl(\int_{\mathbb{R}^d}|u|^q \,\mathrm{d}x\Bigr)^{p/q-1}\Bigl(\int_{\mathbb{R}^d}|u|^{q-2}|\nabla u|^2 \,\mathrm{d}x\Bigr) \,\mathrm{d}s\lesssim\|u_0\|_2^{p-2}\|\nabla u_0\|_2^2,
$$
which in turn would follow from
\begin{equation}\label{preholder}
\int_{\mathbb{R}}\Bigl(\int_{\mathbb{R}^d}|u|^q \,\mathrm{d}x\Bigr)^{\frac{p-2}{q}}\Bigl(\int_{\mathbb{R}^d}|\nabla u|^q \,\mathrm{d}x\Bigr)^\frac{2}{q}\,\mathrm{d}s\lesssim\|u_0\|_2^{p-2}\|\nabla u_0\|_2^2
\end{equation}
by an application of H\"older's inequality in the second inner integral. However, \eqref{preholder} follows from a further application of H\"older's inequality in the variable $s$, followed by two applications of the Strichartz inequality \eqref{e:Strichartz}. Here we have used the fact that if $u$ solves the Schr\"odinger equation then so does $\nabla u$.
\end{proof}
We remark that when $p=q$ the identity \eqref{dualityidentity} identifies the Sobolev--Strichartz inequality
$$
\int_{\mathbb{R}}\left\{\int_{\mathbb{R}^d}|u|^{q}\left|\frac{\nabla u}{u}\right|^2 \,\mathrm{d}x+\left(\frac{q-2}{4}\right)\int_{\mathbb{R}^d}
|u|^{q}\left|\frac{\nabla(|u|^2)}{|u|^2}\right|^2 \,\mathrm{d}x\right\} \,\mathrm{d}s\leq c^p\|u_0\|_2^{p-2}\|\nabla u_0\|_2^2$$
as a certain ``dual" form of the classical Strichartz inequality
$$
\int_{\mathbb{R}}\int_{\mathbb{R}^d}|u|^p \,\mathrm{d}x \mathrm{d}s\leq c^p\|u_0\|_2^p.
$$
As we saw in Section \ref{sect2}, if $q\in 2\mathbb{N}$ and $q$ divides $p$ at least, both of the above inequalities may be established with the sharp constant $c=C_{p,q}$.

As we remarked in the introduction, it is conceivable that the above argument may be adapted to more general settings, such as Strichartz estimates on Riemannian manifolds (see, for example \cite{HTW}).

\section{Monotonicity and the Stein--Tomas restriction theorem in low dimensions}\label{sect3}
As we have seen, the Fourier-analytic methods in Section \ref{sect2} apply equally well in the setting of various different dispersive and wave equations. It is therefore reasonable to expect similar monotonicity statements concerning more general dispersive (pseudo-differential) equations. A convenient context for such an analysis is that of the Stein--Tomas restriction theorem. For simplicity we restrict our attention to two-dimensional surfaces in $\mathbb{R}^3$, although as will be apparent, similar statements are possible for curves in the plane.

Consider the Fourier extension operator
$$
\mathcal{E}g(x,s):=\int_Ug(\xi)e^{i(s\phi(\xi)+x\cdot\xi)} \,\mathrm{d}\xi
$$
associated with the smooth graphing function $\phi:U\rightarrow [0,\infty)$, where $U$ is some compact subset of $\mathbb{R}^2$.
We assume that $\phi$ graphs a smooth convex surface $S$ with everywhere nonvanishing curvature.

Now, for $g\in L^2(U)$ and $t\geq 0$ let $g_t(\xi)=e^{-t\phi(\xi)}g(\xi)$.

\begin{theorem} There exists a constant $c<\infty$ such that the function $Q:(0,\infty)\rightarrow \mathbb{R}$ given by
$$
Q(t)=c\|g_t\|_{L^2(U)}^4-\|\mathcal{E}g_t\|_{L^4_{x,s}(\mathbb{R}^2\times\mathbb{R})}^4
$$
is completely monotone (decreasing).
\end{theorem}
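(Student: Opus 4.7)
The plan is to mimic the Fourier-analytic strategy used for the sharp results in Section~\ref{sect2}, with the explicit computation of the mass of the underlying $\delta$-measure replaced by a uniform upper bound coming from the geometric hypotheses on $S$. First I would expand the $L^4$ norm and integrate in $(x,s)$ to obtain
\[
\|\mathcal{E}g_t\|_{L^4}^4 = (2\pi)^3 \int\int e^{-2t(\phi(\xi_1)+\phi(\xi_2))}\, g(\xi_1)g(\xi_2)\,\overline{g(\eta_1)g(\eta_2)}\, d\Sigma_\xi(\eta)\,d\xi,
\]
where
\[
d\Sigma_\xi(\eta) = \delta(\xi_1+\xi_2-\eta_1-\eta_2)\,\delta(\phi(\xi_1)+\phi(\xi_2)-\phi(\eta_1)-\phi(\eta_2))\,d\eta.
\]
On the support of $d\Sigma_\xi$ the identity $\phi(\xi_1)+\phi(\xi_2)=\phi(\eta_1)+\phi(\eta_2)$ holds, so the exponential weight is invariant under the swap $\xi\leftrightarrow\eta$, and since $d\Sigma_\xi(\eta)\,d\xi = d\Sigma_\eta(\xi)\,d\eta$, the polarisation identity $2\mathrm{Re}(a\overline{b}) = |a|^2+|b|^2-|a-b|^2$ (exactly as in the proofs of Theorems~\ref{t:QSchro} and~\ref{t:QOT}) allows one to rewrite the right-hand side as
\[
(2\pi)^3 \int\int e^{-2t(\phi(\xi_1)+\phi(\xi_2))} |g(\xi_1)|^2|g(\xi_2)|^2\,d\Sigma_\xi(\eta)\,d\xi - \frac{(2\pi)^3}{2}\int\int e^{-2t(\phi(\xi_1)+\phi(\xi_2))}\,|g(\xi_1)g(\xi_2)-g(\eta_1)g(\eta_2)|^2\,d\Sigma_\xi(\eta)\,d\xi.
\]

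The hard step, which I expect to be the main obstacle, is to establish the uniform mass bound
\[
M := \sup_{\xi \in U^2} \int_{U^2} d\Sigma_\xi(\eta) < \infty.
\]
For the paraboloid or cone this is an exact computation, but in general one must extract it from the geometric hypotheses on $S$. My plan is to perform the midpoint substitution $\eta_1 = m-h,\ \eta_2 = m+h$ with $m = \tfrac{1}{2}(\xi_1+\xi_2)$, which resolves the first $\delta$ and reduces the mass to an integral of $\delta(\psi(h)-\sigma)$ with $\psi(h) = \phi(m+h)+\phi(m-h)$. The strict convexity of $\phi$ forced by the nonvanishing curvature of $S$ ensures $\nabla\psi$ vanishes only at $h=0$, where its Hessian equals $2\,\mathrm{Hess}\,\phi(m)$ and is uniformly positive definite over the compact set $U$. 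A local analysis near the critical point then shows that the arc length of the level set grows at exactly the same rate as $|\nabla\psi|$ decays, so the integral remains bounded, and the contribution away from $h=0$ is controlled by the compactness of $U$.

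Once $M < \infty$ is available, take $c = (2\pi)^3 M$. Writing
\[
c\,\|g_t\|_{L^2(U)}^4 = c\int\int e^{-2t(\phi(\xi_1)+\phi(\xi_2))}|g(\xi_1)|^2|g(\xi_2)|^2\,d\xi
\]
and subtracting $\|\mathcal{E}g_t\|_{L^4}^4$ from this expression, one arrives at
\[
Q(t) = \int\int e^{-2t(\phi(\xi_1)+\phi(\xi_2))}|g(\xi_1)|^2|g(\xi_2)|^2\Bigl(c-(2\pi)^3\int_{U^2} d\Sigma_\xi(\eta)\Bigr)d\xi + \frac{(2\pi)^3}{2}\int\int e^{-2t(\phi(\xi_1)+\phi(\xi_2))}\,|g(\xi_1)g(\xi_2)-g(\eta_1)g(\eta_2)|^2\,d\Sigma_\xi(\eta)\,d\xi.
\]
The parenthesised factor in the first term is nonnegative by the choice of $c$, and the integrand of the second is a perfect square. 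Since $\phi\geq 0$, each exponential $e^{-2t(\phi(\xi_1)+\phi(\xi_2))}$ is completely monotone in $t$, so $Q$ is a nonnegative superposition of completely monotone functions and hence completely monotone decreasing.
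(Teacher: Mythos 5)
Your proposal is correct and follows the same fundamental strategy as the paper: expand the $L^4$ norm into a quadrilinear integral against the measure $\mathrm{d}\Sigma_\xi$, bound its mass uniformly using the curvature of $S$, and symmetrise using $\mathrm{d}\Sigma_\xi(\eta)\mathrm{d}\xi = \mathrm{d}\Sigma_\eta(\xi)\mathrm{d}\eta$ together with the invariance of $\phi(\xi_1)+\phi(\xi_2)$ on the support. Two small differences in execution: the paper obtains the mass bound in one line by observing that $\int\mathrm{d}\Sigma_\xi = \mu * \mu(\xi_1+\xi_2,\phi(\xi_1)+\phi(\xi_2))$ and invoking the boundedness of $\mu*\mu$ for curved surfaces, whereas you prove it directly via the coarea analysis of $\psi(h)=\phi(m+h)+\phi(m-h)$ (which is essentially the proof of that boundedness, and is sound); and the paper establishes complete monotonicity by checking the sign of each derivative $Q^{(k)}$ via a chain of inequalities, whereas you produce an exact representation of $Q$ as a nonnegative superposition of the exponentials $e^{-2t(\phi(\xi_1)+\phi(\xi_2))}$, which is marginally cleaner and matches the style of Section \ref{sect2}.
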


\begin{proof}
For $F:U\times U\rightarrow\mathbb{C}$ and $\xi=(\xi_1,\xi_2)\in U\times U$ let
$$
PF(\xi)=\int_{U\times U}F(\eta) \,\mathrm{d}\Sigma_\xi(\eta)
$$
where
$$
\,\mathrm{d}\Sigma_\xi(\eta)=\delta(\xi_1+\xi_2-\eta_1-\eta_2)\delta(\phi(\xi_1)+\phi(\xi_2)-\phi(\eta_1)-\phi(\eta_2)) \mathrm{d}\eta.
$$
Notice that if $\mathbf{1}$ denotes the constant function equal to $1$ on $U\times U$ then
$$
P\mathbf{1}(\xi)=\mu*\mu(\xi_1+\xi_2,\phi(\xi_1)+\phi(\xi_2)),
$$
where the $S$-carried measure $\mathrm{d}\mu$ is given by
$$
\int\psi \,\mathrm{d}\mu := \int_U\psi(u,\phi(u))\,\mathrm{d}u.
$$ 
The nonvanishing curvature of the surface $S$ guarantees that $\mu*\mu$ is a bounded function on $\mathbb{R}^3$, and thus there exists a constant $c<\infty$ such that $P\mathbf{1}(\xi)\leq c$ for all $\xi\in U\times U$.

Next we choose the constant c in the definition of $Q$ to be $\|P\mathbf{1}\|_\infty$. Multiplying out the powers in the expression for $Q$ and using Fubini's theorem we obtain
\begin{eqnarray*}
\begin{aligned}
Q(t)&=c\int_{U}|g_t(\xi_1)|^2 \,\mathrm{d}\xi_1\int_{U}|g_t(\xi_2)|^2 \,\mathrm{d}\xi_2 \\
&\qquad \qquad -\int_{U^4}g_t(\xi_1)g_t(\xi_2)\overline{g_t(\eta_1)}\overline{g_t(\eta_2)}\delta(\xi_1+\xi_2-\eta_1-\eta_2)\delta(\phi(\xi_1)+\phi(\xi_2)-\phi(\eta_1)-\phi(\eta_2)) \,\mathrm{d}\eta \,\mathrm{d}\xi\\
&=c\int_{U^2}|G_t(\xi)|^2 \,\mathrm{d}\xi - \int_{U^4}G_t(\xi)\overline{G_t(\eta)} \,\mathrm{d}\Sigma_\xi(\eta)\mathrm{d}\xi\\
&=c\int_{U^2}|G_t(\xi)|^2 \,\mathrm{d}\xi-\int_{U^2}G_t(\xi)\overline{PG_t(\xi)}\,\mathrm{d}\xi,\\
\end{aligned}
\end{eqnarray*}
where $G_t=g_t\otimes g_t$.
Differentiating through the integral, using the self-adjointness of $P$ along with the fact that $$\frac{\,\mathrm{d}}{\,\mathrm{d}t}G_t(\xi)=-(\phi(\xi_1)+\phi(\xi_2))G_t(\xi),$$ we obtain
\begin{eqnarray*}
\begin{aligned}
Q'(t)
&=2\int_{U^2}(\phi(\xi_1)+\phi(\xi_2))G_t(\xi)\overline{PG_t(\xi)} \,\mathrm{d}\xi-2c\int_{U^2}(\phi(\xi_1)+\phi(\xi_2))|G_t(\xi)|^2 \,\mathrm{d}\xi,
\end{aligned}
\end{eqnarray*}
and so
\begin{eqnarray*}
\begin{aligned}
Q'(t)
&\leq 2\int_{U^2}(\phi(\xi_1)+\phi(\xi_2))G_t(\xi)\overline{PG_t(\xi)} \,\mathrm{d}\xi-2\int_{U^2}(\phi(\xi_1)+\phi(\xi_2))|G_t(\xi)|^2P\mathbf{1}(\xi) \,\mathrm{d}\xi\\
&=2\int_{U^4}(\phi(\xi_1)+\phi(\xi_2))G_t(\xi)\overline{G_t(\eta)} \,\mathrm{d}\Sigma_\xi(\eta) \mathrm{d}\xi-2\int_{U^4}(\phi(\xi_1)+\phi(\xi_2))|G_t(\xi)|^2 \,\mathrm{d}\Sigma_\xi(\eta) \mathrm{d}\xi.\\
\end{aligned}
\end{eqnarray*}
Now, interchanging the order of integration, using the fact that $\phi(\xi_1)+\phi(\xi_2)=\phi(\eta_1)+\phi(\eta_2)$ on the support of $\mathrm{d}\Sigma_\xi$, and interchanging variables of integration reveals that
$$
\int_{U^4}(\phi(\xi_1)+\phi(\xi_2))|G_t(\xi)|^2 \,\mathrm{d}\Sigma_\xi(\eta) \mathrm{d}\xi=
\int_{U^4}(\phi(\xi_1)+\phi(\xi_2))|G_t(\eta)|^2 \,\mathrm{d}\Sigma_\xi(\eta) \mathrm{d}\xi,
$$
and so
\begin{eqnarray*}
\begin{aligned}
Q'(t)
&\leq 2\int_{U^4}(\phi(\xi_1)+\phi(\xi_2))G_t(\xi)\overline{G_t(\eta)} \,\mathrm{d}\Sigma_\xi(\eta)\mathrm{d}\xi - \int_{U^4}(\phi(\xi_1)+\phi(\xi_2))|G_t(\xi)|^2 \,\mathrm{d}\Sigma_\xi(\eta) \mathrm{d}\xi\\
& \qquad \qquad -\int_{U^4}(\phi(\xi_1)+\phi(\xi_2))|G_t(\eta)|^2 \,\mathrm{d}\Sigma_\xi(\eta) \mathrm{d}\xi\\
&= - \int_{U^4}(\phi(\xi_1)+\phi(\xi_2))|G_t(\xi)-G_t(\eta)|^2 \,\mathrm{d}\Sigma_\xi(\eta) \mathrm{d}\xi \leq 0
\end{aligned}
\end{eqnarray*}
for all $t>0$. Arguing as above, but taking $k$ derivatives, reveals that
$$
(-1)^{k}Q^{(k)}(t)\geq \int_{U^4}(\phi(\xi_1)+\phi(\xi_2))^k|G_t(\xi)-G_t(\eta)|^2\,\mathrm{d}\Sigma_\xi(\eta)\mathrm{d}\xi\geq 0
$$
for every $k$.
\end{proof}
\subsection*{Remark}
Of course the flow $t\mapsto g_t$, which amounts to a simple damping of the function $g$ by an appropriate factor, is not diffusive. However, if the surface $S$ is replaced by a suitable compact manifold without boundary then it seems natural to return to diffusion. If $\mathrm{d}\sigma$ denotes surface measure on $\mathbb{S}^2$ then it was recently shown by Foschi \cite{Fo2013} that
$$
\int_{\mathbb{R}^3}|\widehat{g\mathrm{d}\sigma}|^4\leq\int_{\mathbb{R}^3}|\widehat{\mathrm{d}\sigma}|^4
$$ 
for all $g\in L^2(\mathbb{S}^2)$ satisfying $\|g\|_2=1$; i.e. that the Stein--Tomas restriction theorem for the sphere in $\mathbb{R}^3$ is extremised by constant functions. This raises the possibility that there is an underlying monotonicity phenomenon as the function $g$ (or $|g|^2$) diffuses under, for instance, the heat equation $\partial_t u=\Delta_{\mathbb{S}^2}u$. Here $\Delta_{\mathbb{S}^2}$ denotes the Laplace--Beltrami operator on $\mathbb{S}^2$. It seems plausible that this may follow from the representation formula
$
\|\widehat{g\mathrm{d}\sigma}\|_4^4=(2\pi)^3\langle G,PG\rangle,
$
where $G=g\otimes g$ and $P:L^2(\mathbb{S}^2\times\mathbb{S}^2)\rightarrow L^2(\mathbb{S}^2\times\mathbb{S}^2)$ is given by
$$
PG(x_1,x_2)=\frac{1}{|x_1+x_2|}\int_0^{2\pi}G(\rho_\theta(x_1,x_2)) \,\mathrm{d}\theta,
$$
where $\rho_\theta$ applied to $(x_1,x_2)$ rotates both $x_1, x_2$ on $\mathbb{S}^2$ clockwise through an angle $\theta$ about their midpoint. The corresponding representation formula in the context of the paraboloid (see \cite{FGH}) appears to be more elementary since it does not have the weight factor $|x_1+x_2|$ and the associated rotations $\rho_\theta$ are easily seen to be isometries.

\section{Remarks on the Strichartz estimates for the kinetic transport equation}\label{sect4}
It is well-known that the solution of the kinetic transport equation
\begin{equation*}
\partial_s f(s,x,v) + v \cdot \nabla_x f(s,x,v) = 0, \qquad f(0,x,v) = f^0(x,v)
\end{equation*}
for $(s,x,v) \in \mathbb{R} \times \mathbb{R}^d \times \mathbb{R}^d$,
satisfies estimates that are very similar to the Strichartz estimates \eqref{strichschrod} for the Schr\"odinger equation. These may be written as
\begin{equation} \label{e:Strichartz}
\|\rho(f^0)\|_{L^q_sL^p_x} \lesssim \|f^0\|_{L^a_{x,v}},
\end{equation}
where the macroscopic density $\rho(f^0)$ is given by
$$
\rho(f^0)(s,x)=\int_{\mathbb{R}^d}f(s,x,v) \,\mathrm{d}v = \int_{\mathbb{R}^d}f^0(x-vs,v) \,\mathrm{d}v.
$$
Necessary and sufficient conditions on the exponents $(a,p,q)$ for \eqref{e:Strichartz} to hold are
\begin{equation} \label{e:r1scaling}
q > a, \qquad p \geq a, \qquad \frac{2}{q} = d\bigg(1-\frac{1}{p}\bigg), \qquad  \frac{1}{a} = \frac{1}{2}\bigg(1 + \frac{1}{p}\bigg);
\end{equation}
see \cite{CP}, \cite{KT} and \cite{BBGL}. While quite similar to the Strichartz estimates for the Schr\"odinger equation, these inequalities contain some interesting phenomenological differences. The most apparent is the absence of an endpoint in \eqref{e:r1scaling}; see \cite{KT} and \cite{BBGL}. In this section we highlight a more subtle difference concerning the nature of the flows under which we might expect monotonicity properties.
Given the monotonicity phenomena in the context of the Strichartz estimates for the Schr\"odinger equation, a natural candidate for consideration is the case $p=q=\frac{d+2}{d}$ and $a=\frac{d+2}{d+1}$, namely
\begin{equation}\label{purenorm}
\|\rho(f^0)\|_{L^{\frac{d+2}{d}}_{s,x}} \leq C \|f^0\|_{L^{\frac{d+2}{d+1}}_{x,v}}.
\end{equation}
Notice that for $d=1, 2$ the exponent on the left hand side is an integer, making \eqref{purenorm} an analogue of the pure-norm Strichartz estimate for the Schr\"odinger equation.
By duality \eqref{purenorm} is equivalent to
\begin{equation}\label{purenormdual}
\|\rho^*(g)\|_{L^{d+2}_{x,v}} \leq C \|g\|_{L^{\frac{d+2}{2}}_{s,x}}
\end{equation}
where
$$
\rho^*(g)(x,v)=\int_{\mathbb{R}}g(s,x+vs) \,\mathrm{d}s,
$$
which may be interpreted as a space-time X-ray transform estimate. We remark that in the particular case $d=1$, the inequality \eqref{purenorm} is effectively self-dual, as may be seen by suitably interchanging the roles of the scalar parameters $s,x,v$. A sharp form of \eqref{purenormdual} was found recently by Drouot \cite{Drouot}, who identified the functions $$g(s,x)=\frac{1}{1+s^2+|x|^2}$$ as extremisers. Shortly afterwards Flock \cite{Flock}, following work of Christ \cite{ChristRadon}, established that all extremisers take this form up to the symmetries of the inequality. Of course this characterisation constrains the flows that we might consider in this context, eliminating, for example, a heat-flow approach in the spatial variable as in \cite{BCT}. However, there is some evidence to suggest that certain nonlinear space-time diffusions may be appropriate. To see this we consider a more general $k$-plane transform
$$
\rho_k^*(g)(x,v)=\int_{\mathbb{R}^k}g(s,x+vs) \,\mathrm{d}s,
$$
where now $x\in\mathbb{R}^{d+1-k}$ and $v$ is a $(d+1-k)\times k$ matrix which we identify with $\mathbb{R}^{k(d+1-k)}$ equipped with Lebesgue measure. Of course $\rho^*_1(g)=\rho^*(g)$. It was shown in \cite{Flock}, following \cite{ChristRadon} in the case $k=d$, that $$\|\rho_k^*(g)\|_{L^{d+2}_{x,v}}=C\|T_{k,d+1}(g)\|_{L^{d+2}(\mathcal{M}_{k,d+1})},$$ where $T_{k,d+1}$ denotes the classical $k$-plane transform on $\mathbb{R}\times\mathbb{R}^d\cong\mathbb{R}^{d+1}$ and $\mathcal{M}_{k,d+1}$ the Grassmann manifold of all affine $k$-planes in $\mathbb{R}^{d+1}$. The extremisers of the associated inequality
\begin{equation}\label{purenormdualk}\|\rho_k^*(g)\|_{L^{d+2}_{x,v}}=C\|T_{k,d+1}(g)\|_{L^{d+2}(\mathcal{M}_{k,d+1})}\lesssim \|g\|_{L^{\frac{d+2}{k+1}}(\mathbb{R}^{d+1})}\end{equation}
take the form
\begin{equation}\label{extremisersDrouot} g(s,x)=\frac{1}{(1+s^2+|x|^2)^{\frac{k+1}{2}}},
\end{equation}
again up to symmetries; see \cite{ChristRadon}, \cite{Drouot} and \cite{Flock}.
Interpolating \eqref{purenormdualk} with the trivial $L^1$ estimate for $T_{k,d+1}$ then gives
\begin{equation}\label{purenormdualkp}\|T_{k, d+1}(g)\|_{L^{q}(\mathcal{M}_{k,d+1})}\lesssim \|g\|_{L^{p}(\mathbb{R}^{d+1})}\end{equation}
where $\frac{d+1}{p}=k+\frac{d+1-k}{q}$, $1\leq p\leq \frac{d+2}{k+1}$. By combining elements of \cite{BLoss} and \cite{CCL} it follows, at least when $q=k=2$, that the corresponding functional
\begin{equation}\label{functional}
\mathcal{F}(g)=c\|g\|_{L^{p}(\mathbb{R}^{d+1})}^q-\|T_{k, d+1}(g)\|_{L^{q}(\mathcal{M}_{k,d+1})}^q,
\end{equation}
with $c=c_{*}(k,q)$ denoting the optimal constant in \eqref{purenormdualkp}, is monotone with respect to a certain fast diffusion.
More specifically, applying Drury's identity (as in \cite{BLoss}) we may write
\begin{equation}\label{druryidentity}
\|T_{2,d+1}(g)\|_{L^2(\mathcal{M}_{2,d+1})}^2 = C\int_{\mathbb{R}^{d+1}}\int_{\mathbb{R}^{d+1}}\frac{g(x)g(y)}{|x-y|^{d-1}} \,\mathrm{d}x\mathrm{d}y,
\end{equation}
for some constant $C$, which upon applying the Hardy--Littlewood--Sobolev monotonicity theorem of Carlen, Carrillo and Loss \cite{CCL} (see also the variant in \cite{Dolbeault}) yields the following:
\begin{theorem}\label{ccl} Suppose $d>1$, $q=k=2$, $m=\frac{d+1}{d+3}$ and that $g\in L^{\frac{2(d+1)}{d+3}}(\mathbb{R}^{d+1})$ is nonnegative and of compact support.
If $u:[0,\infty)\times \mathbb{R}^{d+1}\rightarrow [0,\infty)$ satisfies
\begin{equation}\label{fast}
\partial_t u=\Delta(u^{m});\;\;\;u(0,\cdot)=g,
\end{equation} then
$t\mapsto \mathcal{F}(u(t,\cdot))$ is nonincreasing.
\end{theorem}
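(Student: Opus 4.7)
The plan is to reduce Theorem \ref{ccl} to the Hardy--Littlewood--Sobolev monotonicity theorem of Carlen, Carrillo and Loss via Drury's identity. First, I would invoke \eqref{druryidentity} to write
\[
\|T_{2,d+1}(u(t,\cdot))\|_{L^2(\mathcal{M}_{2,d+1})}^2 = C \int_{\mathbb{R}^{d+1}}\int_{\mathbb{R}^{d+1}} \frac{u(t,x)\,u(t,y)}{|x-y|^{d-1}} \,\mathrm{d}x\mathrm{d}y
\]
for an explicit positive constant $C$, so that the subtracted term in $\mathcal{F}(u(t,\cdot))$ is recast as a Riesz-kernel energy on $\mathbb{R}^{d+1}\times\mathbb{R}^{d+1}$ with Riesz parameter $\lambda = d-1$.

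Setting $n = d+1$ and $\lambda = d-1$, the sharp HLS exponent is $p = \tfrac{2n}{2n-\lambda} = \tfrac{2(d+1)}{d+3}$, which matches the hypothesis $g \in L^{\frac{2(d+1)}{d+3}}(\mathbb{R}^{d+1})$. After absorbing $C$ into the coefficient, $\mathcal{F}(u(t,\cdot))$ becomes (up to a positive factor) exactly the deficit in the sharp HLS inequality in $\mathbb{R}^{d+1}$ with Riesz parameter $d-1$, with the optimal constant $c_*(2,2)$ in \eqref{purenormdualkp} corresponding to the sharp HLS constant transported through Drury's identity. The Carlen--Carrillo--Loss theorem then asserts that this HLS deficit is nonincreasing along the fast diffusion $\partial_t u = \Delta(u^m)$ for the specific exponent $m$ tied to the HLS scaling in dimension $n$; in our setting this forces $m = \tfrac{d+1}{d+3}$, precisely the value in \eqref{fast}. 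The assumption $d > 1$ places us in the genuinely fast-diffusion regime $m < 1$ where the CCL argument operates.

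The main technical obstacle is verifying that the CCL monotonicity actually applies to weak solutions of \eqref{fast} starting from our class of initial data. One needs global existence and nonnegativity of the solution, preservation of the relevant $L^p$ integrability along the flow, and enough smoothness to differentiate $\mathcal{F}(u(t,\cdot))$ in $t$ and justify the integrations by parts underlying the CCL computation. The nonnegativity and compact-support hypotheses on $g$ are chosen precisely to place the initial datum within the class treated in \cite{CCL} (or the variant in \cite{Dolbeault}), so that the monotonicity of the HLS deficit transfers verbatim to $\mathcal{F}$ via the identification established in the first two steps.
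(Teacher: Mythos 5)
Your proposal follows exactly the paper's argument: Drury's identity \eqref{druryidentity} converts $\|T_{2,d+1}(u(t,\cdot))\|_{L^2}^2$ into the Riesz energy with kernel $|x-y|^{-(d-1)}$ in $\mathbb{R}^{n}$, $n=d+1$, so that $\mathcal{F}$ becomes (up to a positive constant) the sharp HLS deficit with $\lambda = n-2$ and $p = \tfrac{2n}{n+2} = \tfrac{2(d+1)}{d+3}$, to which the Carlen--Carrillo--Loss monotonicity theorem for the fast diffusion exponent $m = \tfrac{n}{n+2} = \tfrac{d+1}{d+3}$ applies verbatim. The exponent bookkeeping is correct, and the regularity/existence caveats you flag are precisely those the paper delegates to \cite{CCL} and \cite{Dolbeault} without further comment.
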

The relevance of this particular fast diffusion is indicated by the nature of the extremisers \eqref{extremisersDrouot} and the asymptotic profiles, or so-called Barenblatt profiles, for solutions of \eqref{fast}; see \cite{CCL} for further discussion. 
It is conceivable that alternative forms of Drury's identity may allow a variant of Theorem \ref{ccl} in the case $k=1$, $q=d+2$, yielding monotonicity phenomena for the Strichartz inequalities \eqref{purenormdual}, \eqref{purenorm}. We do not pursue this further here.

\end{document}